\DeclareMathAlphabet{\mathcal}{OMS}{cmsy}{m}{n}
\DeclareSymbolFont{largesymbols}{OMX}{cmex}{m}{n}
\newtheorem{theorem}{Theorem}[section]
\newtheorem{proposition}[theorem]{Proposition}
\newtheorem{lemma}[theorem]{Lemma}
\newtheorem{corollary}[theorem]{Corollary}
\newtheorem{remark}[theorem]{Remark}
\newcommand{\e}{{\rm e}}
\newcommand{\ii}{{\rm i}}
\begin{document}

\title{Physical Space Proof of Bilinear Estimates and Applications to Nonlinear Dispersive Equations}

\author{Li Tu}
\address{School of Mathematical Sciences, Fudan University, Shanghai 200433, China.}
\email{23110180036@m.fudan.edu.cn}

\author{Yi Zhou}
\address{School of Mathematical Sciences, Fudan University, Shanghai 200433, China.}
\email{yizhou@fudan.edu.cn}

\date{\today}

\subjclass[2010]{35Q53, 35R05}
\keywords{Local well-posedness; Div-curl lemma; Bilinear estimate}

\begin{abstract}
    We give a simpler proof for the local well-posedness of the modified Korteweg-de Vries equations and modified Benjamin-Ono equation in $H^{\frac{1}{4}}(\mathbb{R})$ and $H^{\frac{1}{2}}(\mathbb{R})$, respectively.
    The proof is based on the Strichartz estimate, dyadic decomposition and a bilinear estimate given by a new type of div-curl lemma.
\end{abstract}

\maketitle
\setcounter{tocdepth}{1}
\tableofcontents

\section{Introduction}

In this paper we study the initial value problem (IVP) for the (defocusing) modified Korteweg-de Vries (mKdV) equation\footnote{
    Our method can be also applied to the equation with nonlinearity $-\partial_x(u^3)$, i.e., the focusing case. 
}
\begin{equation}\label{mKdV}
    \begin{cases}
        \partial_tu + \partial_x^3u + \partial_x(u^3) = 0, \quad u\colon [0, T] \times \mathbb{R} \rightarrow \mathbb{R},  \\ 
        t = 0\colon u = \phi \in H^s(\mathbb{R}),
    \end{cases}
\end{equation}
and the modified Benjamin-Ono (mBO) equation 
\begin{equation}\label{mBO}
        \begin{cases}
            \partial_tu + \mathcal{H}\partial_x^2u + \partial_x(u^3) = 0, \quad u\colon [0, T] \times \mathbb{R} \rightarrow \mathbb{R},  \\ 
            t = 0\colon u = \phi \in H^s(\mathbb{R}), 
        \end{cases}
\end{equation}
where $\mathcal{H}$ is the Hilbert transform:
\begin{equation*}
    \mathcal{H}u(x) := \frac{1}{\pi}{\rm p.v.}\int_{\mathbb{R}}\frac{u(y)}{x - y} \,{\rm d}y.
\end{equation*}
Both of the two equations are mathematical models describing the weakly nonlinear long waves propagating in channels with dispersion effect (see \cite{SV}).

The IVP \eqref{mKdV} is invariant under the scaling transform 
\begin{equation*}
    u(t, x) \mapsto \lambda u(\lambda^3t, \lambda x)
\end{equation*}
and for \eqref{mBO}, 
\begin{equation*}
    u(x, t) \mapsto \lambda^{\frac{1}{2}}u(\lambda^2t, \lambda x),
\end{equation*}
which imply the critical index $s_c = -1/2$ and $s_c = 0$, respectively, in the sense that the homogeneous Sobolev norm $\dot H^{s_c}(\mathbb{R})$ is invariant under the scaling transform above.
It is known that the $L^2$-norm of the solution to \eqref{mKdV} and \eqref{mBO} is conserved:
\begin{equation*}
    \frac{{\rm d}}{{\rm d}t}\int_{\mathbb{R}}|u|^2 \,{\rm d}x = 0, \qquad {\rm when}\ u\ {\rm solves}\ \eqref{mKdV}\ {\rm or}\ \eqref{mBO}.
\end{equation*} 

For equation \eqref{mKdV} with quadratic nonlinearity
\begin{equation}\label{KdV}
    \partial_tu + \partial_x^3u + \frac{3}{2}\partial_x(u^2) = 0, \qquad u\colon [0, T] \times \mathbb{R} \rightarrow \mathbb{R},
\end{equation}
namely, the Korteweg-de Vries (KdV) equation, Miura \cite{M1} introduce a transform relating the solutions between \eqref{mKdV} and \eqref{KdV}:
\begin{equation}\label{Miura}
    \mathcal{M}u := \partial_xu + u^2
\end{equation}
Direct computation yields that if $u$ is a smooth (real-valued) solution to \eqref{mKdV}, then $\mathcal{M}u$ is a smooth (real-valued) solution to \eqref{KdV}.
Roughly speaking, the Miura transform \eqref{Miura} acts like a derivative, in particularly, mapping $H^s(\mathbb{R})$ into $H^{s - 1}(\mathbb{R})$ (one may refer to \cite[Section 9]{CCT} for more details),
and was used to construct an infinite number of conservation laws of the KdV equation \cite{MGK}.
An analogue of the Miura transform for Benjamin-Ono (BO) equation
\begin{equation}\label{BO}
    \partial_tu + \mathcal{H}\partial_x^2u + \frac{3}{2}\partial_x(u^2) = 0
\end{equation}
is the Bock-Kruskal transform \cite{BK}.
This (two-parameter) transform $u \mapsto v$ was defined implicitly via the formula 
\begin{equation*}
    2u = -\frac{1}{v + a}\mathcal{H}\partial_xw - \mathcal{H}\frac{\partial_xw}{v + a} + \frac{2av}{v + a},
\end{equation*}
where $a \in \mathbb{R}$ and $w$ is a real-valued function. 

There has been a large amount of work devoted to the low-regularity local well-posedness (LWP) theory in $H^s(\mathbb{R})$, i.e., trying to lower the best index $s$ for which one has the existence, uniqueness and continuous dependence on the initial data for a finite time interval, whose size depends on $\Vert \phi\Vert_{H^s}$, of the IVPs above. 
Here are some known results: 
For mKdV equation, it has been shown to be LWP for $s \geq 1/4$ by Kenig, Ponce and Vega \cite{KPV1} using local smoothing effects and maximal function estimate with some $L^q_tL^p_x$ estimates of Strichartz type.
It is worth mentioning that the method of multilinear estimate in Bourgain's spaces $X^{s, b}$ does not improve this result \cite{KPV2}, where the LWP of KdV equation for $s \geq -3/4$ was established.
For mBO equation, Kenig-Takaoka \cite{KT1} establish the LWP when $s \geq 1/2$ and Ionescu-Kenig \cite{IK} obtain the LWP for BO equation when $s \geq 0$.
Their proofs are all based on a refinement of Tao's gauge transformation \cite{Tao} and some $X^{s, b}$ techniques.

Note that the results mentioned above, except for BO equation\footnote{
    It was shown by Molinet, Saut and Tzvetkov \cite{MST} that the data-to-solution map of BO equation fails to be $C^2$ continuous from $H^s(\mathbb{R})$ to $H^s(\mathbb{R})$ for any $s \in \mathbb{R}$. 
    Moreover, this map is not uniformly continuous in any neighborhood of the origin for $s \geq 0$ \cite{KT2}.
}, are all sharp if one requires the uniform (or $C^k$) continuity of data-to-solution map; see \cite{KPV3, KT1, CCT}. 
For rougher initial data, the LWP is still avaliable when one requires weaker continuity; 
see the breakthrough of Harrop-Griffiths, Killip and Vi{\c{s}}an \cite{H-GKV1} for the LWP of complex mKdV equation in $H^s(\mathbb{R})$ when $s > -1/2$, and Killip-Laurens-Vi{\c{s}}an for BO equation when $s > -1/2$ \cite{KLV}, where they use the method of commuting flows introduced in \cite{KV1} that fully take advantage of the complete integrability of the equation.

The purpose of this paper is to give an alternative proof for LWP of IVPs \eqref{mKdV} and \eqref{mBO} when $s = 1/4$ and $s = 1/2$, respectively, using a div-curl lemma (see Section \ref{S4}) introduced by Zhou in the work of wave maps \cite{Z1}, and Wang-Zhou in their works of extremal hypersurface \cite{WZ1} and periodic Schr\"odinger flow \cite{WZ2};
see also their recent work of the bilinear estimate for wave equation \cite{WZ3}.

To introduce our idea, we start with the standard iteration scheme (taking mKdV equation \eqref{mKdV} as an example, and the operator $U(t)$ will be defined in Section \ref{S2}):
\begin{equation}\label{31A}
    \begin{cases}
        u^{(0)}(t) = U(t)\phi, \\
        \displaystyle u^{(j)}(t) = U(t)\phi - \int_0^tU(t - \tau)\partial_x((u^{(j - 1)})^3)(\tau) \,{\rm d}\tau, \quad j = 1, 2, \cdots.
    \end{cases}
\end{equation}
Equivalently, 
\begin{equation*}
    \begin{cases}
        \partial_tu^{(0)} + \partial_x^3u^{(0)} = 0, \\ 
        \partial_tu^{(j)} + \partial_x^3u^{(j)} + \partial_x((u^{(j - 1)})^3) = 0, \quad j = 1, 2, \cdots, \\ 
        u^{(j)}(0) = \phi, \quad j = 1, 2, \cdots.
    \end{cases}
\end{equation*}
To obtain the LWP, it suffices to prove that 
\begin{gather*}
    \Vert u^{(j)}\Vert_X \leq C, \\ 
    \Vert u^{(j + 2)} - u^{(j + 1)}\Vert_X \leq \frac{1}{2}\Vert u^{(j + 1)} - u^{(j)}\Vert_X
\end{gather*}
for all $j = 0, 1, 2, \cdots$, where $C > 0$ is a constant and $X$ is a function space to be determined.
It is known that the main difficulty is to find a suitable space to control the nonlinear term, or more accurately, to overcome the loss of derivatives from the nonlinearity during the iteration.
In this paper, we rely merely on Strichartz estimates to balance the integrability and regularity,
and by employing the div-curl lemma mentioned earlier, we can provide a simple \emph{physical space proof} for the bilinear estimates of the dyadic parts of the solution.
This argument allows us to transfer the derivatives from the high-frequency part to the low-frequency part. 
Consequently, we can control the nonlinearity by applying the (inhomogeneous) Littlewood-Paley decomposition to the solution.

As we have mentioned, one of the most crucial steps in our proof is the bilinear estimate proved by the div-curl lemma.
Precisely, let $u$ be the solution to the homogeneous equation of \eqref{mKdV} (resp., \eqref{mBO}).
Then for any dyadic numbers $\mu, \lambda$ satisfying $\mu \ll \lambda$, it holds that (the notations $u_{\mu}$ and $u_{\lambda}$ will be defined in Section \ref{S3})
\begin{equation}\label{D}
    \Vert u_{\mu}u_{\lambda}\Vert_{L_{t, x}^2} \lesssim \lambda^{-1}\Vert u_{\mu}(0)\Vert_{L^2}\Vert u_{\lambda}(0)\Vert_{L^2} \quad \left({\rm resp.,}\ \Vert u_{\mu}u_{\lambda}\Vert_{L_{t, x}^2} \lesssim \lambda^{-\frac{1}{2}}\Vert u_{\mu}(0)\Vert_{L^2}\Vert u_{\lambda}(0)\Vert_{L^2}\right).
\end{equation}
The bilinear estimate above for the homogeneous equation of \eqref{mKdV} (the free Airy equation) arose in \cite{Gr1} and it was proved by a \emph{phase space approach}.
In this aspect, our method seems to be simpler and more intuitive.

Now we state the main results of this paper (the Besov space $B^s_{p, q}$ will be defined in Section \ref{S3}):

\begin{theorem}\label{thm-mKdV}
    Suppose $s = 1/4$. Then there exists $T = T\left(\Vert \phi\Vert_{H^{\frac{1}{4}}(\mathbb{R})}\right) > 0$ and a unique solution $u = u(t)$ of the IVP \eqref{mKdV} in the time interval $[0, T]$ satisfying
    \begin{gather}
        \label{a1}u \in C\left([0, T]; H^{\frac{1}{4}}(\mathbb{R})\right) \cap L^8\left(0, T; B^{\frac{3}{8}}_{4, 2}(\mathbb{R})\right), \\ 
        \label{b1}\partial_x(u^3) \in L^{\frac{8}{7}}\left(0, T; B^{\frac{1}{8}}_{\frac{4}{3}, 2}(\mathbb{R})\right).
    \end{gather}
    Moreover, for any neighborhood $U$ of $\phi$ in $H^{\frac{1}{4}}(\mathbb{R})$ the data-to-solution map from $U$ into the class defined by \eqref{a1}-\eqref{b1} is Lipschitz continuous.
\end{theorem}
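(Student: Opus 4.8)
The plan is to run the Picard iteration scheme \eqref{31A} in a carefully chosen space $X$ that simultaneously encodes: (i) the energy bound $u\in C([0,T];H^{1/4})$, (ii) a Strichartz-type norm at the level $L^8_tB^{3/8}_{4,2}$, and (iii) enough room to close the nonlinear estimate via the bilinear bound \eqref{D}. Concretely I would set $\|u\|_X$ to be (roughly) $\|u\|_{L^\infty_tH^{1/4}}+\|u\|_{L^8_tB^{3/8}_{4,2}}$ on $[0,T]$, and reduce everything to two inequalities: the linear/homogeneous estimate $\|U(t)\phi\|_X\lesssim\|\phi\|_{H^{1/4}}$, which is just the Strichartz estimate for the Airy group stated in Section \ref{S2} together with Bernstein, and the trilinear estimate controlling the Duhamel term,
\begin{equation*}
    \Bigl\|\int_0^tU(t-\tau)\partial_x(v_1v_2v_3)(\tau)\,{\rm d}\tau\Bigr\|_X\lesssim T^{\theta}\prod_{k=1}^3\|v_k\|_X
\end{equation*}
for some $\theta>0$, which also gives the difference estimate (by multilinearity) and hence the contraction once $T$ is small depending on $\|\phi\|_{H^{1/4}}$. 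Standard abstract nonsense then yields existence, uniqueness in $X$, the regularity \eqref{a1}, and Lipschitz dependence; \eqref{b1} will come out as a byproduct of the trilinear estimate (it is essentially the quantity being bounded before applying the Duhamel/Strichartz inequality).

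The heart of the matter is the trilinear estimate, and this is where \eqref{D} enters. First I would dualize: using the inhomogeneous Duhamel-Strichartz (Christ--Kiselev / retarded estimates) it suffices to bound $\|\partial_x(v_1v_2v_3)\|_{L^{8/7}_tB^{1/8}_{4/3,2}}$, and the $\partial_x$ plus the Besov $1/8$ puts us essentially at estimating $\sum_{N}N^{9/8}\|P_N(v_1v_2v_3)\|_{L^{8/7}_tL^{4/3}_x}$ in $\ell^2_N$. Then I decompose each $v_k$ into Littlewood-Paley pieces $v_{k,\lambda_k}$ and split into frequency interactions. In the high-high-low and high-low-low regimes the output frequency $N$ is comparable to the largest input frequency, say $\lambda_1\sim\lambda_2\sim\lambda_{\max}$, and I would group the product as $(v_{1,\lambda_1}v_{2,\lambda_2})\cdot v_{3,\lambda_3}$, applying \eqref{D} to the first factor to gain a full power $\lambda_{\max}^{-1}$ — which exactly defeats the $N^{9/8}$ derivative loss from $\partial_x$ and the Besov weight — while the remaining factor is absorbed by a single Strichartz norm via Hölder in time and space. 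The "resonant" high-high-high case $\lambda_1\sim\lambda_2\sim\lambda_3$ but $N$ small requires a bit more care: there one pairs two of the three factors, loses nothing on the third low output frequency, and the gain $\lambda_{\max}^{-1}$ against $N^{9/8}\le\lambda_{\max}^{9/8}$ is not quite enough by itself, so one must either use a second bilinear estimate on a different pair or exploit that the output frequency $N$ is tiny (Bernstein in $x$) to recover the deficit; I would check that the exponents $8/7$, $4/3$, $3/8$ were chosen precisely to make this balance work with the $T^\theta$ factor coming from Hölder in $t$ since $8/7<8$.

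The main obstacle I anticipate is precisely this bookkeeping of frequency interactions: making sure that after summing over all $(\lambda_1,\lambda_2,\lambda_3,N)$ with the geometric constraints, the $\ell^2$ (Besov) summation converges, which forces the net power of the largest frequency to be strictly negative in every regime and demands that the bilinear gain $\lambda^{-1}$ from \eqref{D} always be applied to the pair carrying the two highest frequencies. A secondary subtlety is that \eqref{D} is stated for solutions of the \emph{homogeneous} equation, whereas in the iteration $v_k=u^{(j-1)}$ solves an inhomogeneous equation; I would handle this by writing each iterate via its own Duhamel formula and noting that $\|u^{(j)}u^{(j)}\|_{L^2_{t,x}}$-type quantities can be controlled by iterating the estimate — i.e. the relevant bilinear norm of the dyadic pieces of $u^{(j)}$ is itself part of the norm $\|\cdot\|_X$ one propagates through the scheme, so that $X$ must in fact include a bilinear component $\sup_{\mu\ll\lambda}\lambda\|u_\mu u_\lambda\|_{L^2_{t,x}}$ from the outset. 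Once $X$ is set up to carry this extra information, the linear estimate for it is exactly \eqref{D}, and the nonlinear estimate for it reduces, after one more application of the Duhamel-Strichartz machinery, to the same trilinear bound as above.
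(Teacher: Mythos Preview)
Your overall architecture (Picard iteration in $X=L^\infty_tH^{1/4}\cap L^8_tB^{3/8}_{4,2}$, reduce to bounding $\|\partial_x(v_1v_2v_3)\|_{L^{8/7}_tB^{1/8}_{4/3,2}}$, Littlewood--Paley decompose, propagate the bilinear estimate to the iterates by including it in the induction) matches the paper's. But your frequency case analysis contains a genuine error that would prevent you from closing the trilinear estimate.

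The bilinear estimate \eqref{D} requires $\mu\ll\lambda$: it only gives a gain when the two factors have \emph{well-separated} frequencies. In your high-high-low case you propose to group the two high factors $(v_{1,\lambda_1}v_{2,\lambda_2})$ with $\lambda_1\sim\lambda_2\sim\lambda_{\max}$ and apply \eqref{D} to that pair --- but there is no gain available there. The correct move, and what the paper does, is always to pair one \emph{high} factor with one \emph{low} factor: in the paper's decomposition (cases $I_{2b}$ and $I_{2c}$) one writes $u_{\lambda_1}u_{\lambda_2}u_{\lambda_3}$ with the smallest frequency paired against the largest via \eqref{D}, and the remaining middle factor is handled by Strichartz in $L^4_{t,x}$. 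The case where the two largest inputs are comparable and the output is low ($I_{2a}$) uses \emph{no} bilinear estimate at all, only H\"older plus Strichartz, because the small output weight $N^{9/8}$ is already harmless.

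A second, smaller point: even after applying \eqref{D} to the correct low-high pair, the gain $\lambda_{\max}^{-1}$ does not ``exactly defeat'' the weight $N^{9/8}\sim\lambda_{\max}^{9/8}$; in the $\ell^{4/3}$ formulation this leaves a residual $\lambda_{\max}^{1/6}$ that must be absorbed by the Strichartz norm of the third factor together with a nontrivial dyadic summation (the paper isolates this in a separate lemma). So the bookkeeping you flag as ``the main obstacle'' is real and is not resolved by the pairing you describe.
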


\begin{theorem}\label{thm-mBO}
    Suppose $s = 1/2$. Then there exists $T = T\left(\Vert \phi\Vert_{H^{\frac{1}{2}}(\mathbb{R})}\right) > 0$ and a unique solution $u = u(t)$ of the IVP \eqref{mBO} in the time interval $[0, T]$ satisfying
    \begin{gather}
        \label{c1}u \in C\left([0, T]; H^{\frac{1}{2}}(\mathbb{R})\right) \cap L^8\left(0, T; B^{\frac{1}{2}}_{4, 2}(\mathbb{R})\right), \\ 
        \label{d1}\partial_x(u^3) \in L^{\frac{8}{7}}\left(0, T; B^{\frac{1}{2}}_{\frac{4}{3}, 2}(\mathbb{R})\right).
    \end{gather}
    Moreover, for any neighborhood $U$ of $\phi$ in $H^{\frac{1}{2}}(\mathbb{R})$ the data-to-solution map from $U$ into the class defined by \eqref{c1}-\eqref{d1} is Lipschitz continuous.
\end{theorem}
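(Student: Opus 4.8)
The plan is to run the Picard iteration \eqref{31A} for \eqref{mBO}, now with propagator $U(t)=\e^{-t\mathcal{H}\partial_x^2}$ (Fourier multiplier $\e^{-\ii t\xi|\xi|}$; see Section~\ref{S2}), and to extract Theorem~\ref{thm-mBO} from a uniform bound $\Vert u^{(j)}\Vert_X\le C$ together with the contraction $\Vert u^{(j+2)}-u^{(j+1)}\Vert_X\le\tfrac12\Vert u^{(j+1)}-u^{(j)}\Vert_X$, where
\[
\Vert u\Vert_X:=\Vert u\Vert_{C([0,T];H^{1/2})}+\Vert u\Vert_{L^8(0,T;B^{1/2}_{4,2})}.
\]
Since $\partial_\xi^2(\xi|\xi|)=2\,{\rm sgn}\,\xi$, the phase is nondegenerate and the pair $(8,4)$ is $L^2$-admissible, so $\Vert U(t)\phi\Vert_{L^8_tL^4_x}\lesssim\Vert\phi\Vert_{L^2}$; applying this to the dyadic pieces and summing in $\ell^2$ gives $\Vert U(t)\phi\Vert_{L^8_tB^{1/2}_{4,2}}\lesssim\Vert\phi\Vert_{H^{1/2}}$, which controls the free term. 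By the Christ--Kiselev lemma the retarded integral obeys $\Vert\int_0^tU(t-\tau)F\,{\rm d}\tau\Vert_{L^8_tB^{1/2}_{4,2}}\lesssim\Vert F\Vert_{L^{8/7}_tB^{1/2}_{4/3,2}}$, and the energy inequality gives the $C_tH^{1/2}$ bound from $\Vert F\Vert_{L^1_tH^{1/2}}$. Hence both iteration inequalities follow from a single \emph{trilinear estimate}, $\Vert\partial_x(u^3)\Vert_{L^{8/7}_tB^{1/2}_{4/3,2}\cap L^1_tH^{1/2}}\lesssim T^{\theta}\Vert u\Vert_X^3$ with $\theta>0$ (and its difference analogue); the first component is exactly the membership \eqref{d1}.

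For the trilinear estimate I expand $u^3=\sum u_{\lambda_1}u_{\lambda_2}u_{\lambda_3}$ and localize the output to frequency $N$ (notation of Section~\ref{S3}), so that the $N$-piece of the left side is comparable to $N^{3/2}\Vert u_{\lambda_1}u_{\lambda_2}u_{\lambda_3}\Vert_{L^{8/7}_tL^{4/3}_x}$ once the $\partial_x$ and the $B^{1/2}$-weight are accounted for. In every regime where at least two of the inputs sit at the top frequency $\sim N$, I split by H\"older in $x$ ($L^4_x\cdot L^2_x$) and in $t$ ($L^8_t\cdot L^{4/3}_t$, using $L^2_t\hookrightarrow L^{4/3}_t$ on $[0,T]$ with a gain $T^{1/4}$), placing one factor in the Strichartz norm $L^8_tL^4_x$ and the remaining frequency-transversal pair in $L^2_{t,x}$ through the bilinear smoothing \eqref{D}. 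Its gain $\lambda^{-1/2}$—the mBO counterpart of the Airy gain $\lambda^{-1}$—together with the $N^{-1/2}$ supplied by each top-frequency factor, exactly cancels the $N^{3/2}$; summation over the low frequencies uses $\sum_{\mu}\mu^{-1/2}(\cdot)_{\mu}\lesssim\Vert(\cdot)\Vert_{\ell^2}$ and the final sum in $N$ is controlled by a Schur-type argument. This is precisely where $s=1/2$ is forced as the balancing exponent.

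The hard case, which I expect to be the main obstacle, is the \emph{high--low--low} interaction in which a single factor $u_{\lambda}$ sits at the top frequency $\lambda\sim N$, the derivative $\partial_x$ falls on it, and the other two factors $u_{\mu},u_{\mu'}$ are at frequencies $\mu,\mu'\ll N$. Here only one top factor and one use of \eqref{D} are available, supplying $N^{-1/2}\cdot N^{-1/2}=N^{-1}$ against the demanded $N^{3/2}$, so a naive count is short by exactly $N^{1/2}$—half a derivative. This is the genuine derivative-loss term, and it is resolved by exploiting the \emph{divergence structure}: because the nonlinearity is the perfect $x$-derivative $\partial_x(u^3)$ and $u$ is real-valued, the offending piece $(\partial_x u_{\lambda})\,u_{\mu}u_{\mu'}$ can be recast—via the div-curl lemma of Section~\ref{S4}, which plays here the role of an integration by parts—so that the surviving derivative acts on the low-frequency product $u_{\mu}u_{\mu'}$, whose frequency is $\ll N$. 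This trades the dangerous factor $N$ for $\max(\mu,\mu')$ and restores the missing $N^{1/2}$; it is exactly the transfer of derivatives from the high-frequency to the low-frequency part announced in the introduction, and it closes the estimate at $s=1/2$ for both the $L^{8/7}_tB^{1/2}_{4/3,2}$ and the $L^1_tH^{1/2}$ components.

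Finally, the contraction follows by applying the same trilinear estimate to $(u^{(j+1)})^3-(u^{(j)})^3=(u^{(j+1)}-u^{(j)})\bigl((u^{(j+1)})^2+u^{(j+1)}u^{(j)}+(u^{(j)})^2\bigr)$, which carries one difference factor and two background factors bounded in $X$; choosing $T=T(\Vert\phi\Vert_{H^{1/2}})$ small lets the factor $T^{\theta}$ absorb the constants and produces the $\tfrac12$-contraction. The limit $u=\lim u^{(j)}$ then lies in $X$, giving \eqref{c1}; persistence in $C([0,T];H^{1/2})$ comes from the energy estimate, \eqref{d1} from the trilinear bound, and uniqueness together with Lipschitz dependence of the data-to-solution map from the difference estimate applied to two solutions.
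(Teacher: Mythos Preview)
Your overall scheme—Picard iteration controlled by the Strichartz estimate \eqref{SE-mBO} together with the bilinear estimate \eqref{BE-BO}—is precisely the paper's route: the paper states that Theorem~\ref{thm-mBO} ``follows similarly'' from the mKdV argument (Proposition~\ref{prop4.1} and Section~\ref{S5}) once Theorem~\ref{be-BO} is available, and your iteration space and nonlinear norm agree with \eqref{c1}--\eqref{d1}. One simplification: the $C_tH^{1/2}$ bound is already part of \eqref{SE-mBO} with right-hand side in $L^{8/7}_tB^{1/2}_{4/3,2}$, so a separate $L^1_tH^{1/2}$ estimate for the nonlinearity is not needed.

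The genuine gap is your treatment of the high--low--low interaction. The remedy you propose—invoking the div-curl lemma of Section~\ref{S4} as an ``integration by parts'' that moves $\partial_x$ from $u_\lambda$ onto the low-frequency product $u_\mu u_{\mu'}$—is not an operation available here and is not what the paper does. In the paper the div-curl lemma appears \emph{only} in Section~\ref{S4}, where it \emph{proves} the bilinear estimates (Proposition~\ref{prop}, Theorem~\ref{be-BO}), and once more in the induction step to propagate \eqref{BE1} to the iterates; it is never re-used inside the trilinear bound. After projecting $\partial_x(u^3)$ to output frequency $N$, Bernstein gives $\Vert P_N\partial_x(u^3)\Vert\sim N\Vert P_N(u^3)\Vert$, and this factor $N$ is fixed by the \emph{output} frequency; the identity $(\partial_xu_\lambda)u_\mu u_{\mu'}=\partial_x(u_\lambda u_\mu u_{\mu'})-u_\lambda\,\partial_x(u_\mu u_{\mu'})$ does not help, since the first term on the right still lives at frequency $\sim\lambda$ and costs the same $\lambda$. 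The paper's treatment of this case is the direct mBO analogue of $I_{2c}$ and Lemma~\ref{lma4.3}: place one low-frequency factor in $L^4_{t,x}$ via Strichartz, put the transversal low--high pair in $L^2_{t,x}$ via \eqref{BE-BO}, and then carry out the dyadic summation as in Section~\ref{S5}; no additional derivative-shifting device is introduced. Your proof sketch should therefore drop the extra div-curl step and instead reproduce the mBO versions of the estimates for $I_{2a}$, $I_{2b}$, $I_{2c}$ and Lemma~\ref{lma4.3} with the exponent changes dictated by $\alpha=0$.
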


It is notably that the method we present here can be also applied to the following dispersion-generalized Benjamin-Ono equation:
\begin{equation}\label{mgBO}
    \begin{cases}
        \partial_tu + D_x^{1 + \alpha}\partial_xu + \partial_x(u^3) = 0, \quad u\colon [0, T] \times \mathbb{R} \rightarrow \mathbb{R}, \\
        t = 0\colon u = \phi \in H^s(\mathbb{R}),
    \end{cases}
\end{equation}
where $D_x$ is the Fourier multiplier with symbol $|\xi|$ and $0 < \alpha < 1$. 
When $\alpha = 1$, equation \eqref{mgBO} is the mKdV equation \eqref{mKdV} and when $\alpha = 0$, it reduces to the mBO equation \eqref{mBO}. 
It has been proved that equation \eqref{mgBO} is LWP when $s \geq 1/2 - \alpha/4$ by Guo \cite{G1} using some $X^{s, b}$ estimates.
Using the div-curl lemma and we can prove that the following bilinear estimate holds for the solution to the homogeneous equation of \eqref{mgBO}:
\begin{equation}\label{be-mgBO}
    \Vert u_{\mu}u_{\lambda}\Vert_{L_{t, x}^2} \lesssim \lambda^{-\frac{1 + \alpha}{2}}\Vert u_{\mu}(0)\Vert_{L^2}\Vert u_{\lambda}(0)\Vert_{L^2} \qquad (\mu \ll \lambda),
\end{equation}
where $\mu$ and $\lambda$ are dyadic numbers. Then it can be also shown from Strichartz estimate, dyadic decomposition and bilinear estimate \eqref{be-mgBO} that IVP \eqref{mgBO} is LWP in $H^{\frac{1}{2} - \frac{\alpha}{4}}(\mathbb{R})$ with regularity
\begin{equation}\label{abc}
    u \in L^8\left(0, T; B^{\frac{1}{2} - \frac{\alpha}{8}}_{4, 2}(\mathbb{R})\right), \quad \partial_x(u^3) \in L^{\frac{8}{7}}\left(0, T; B^{\frac{1}{2} - \frac{3\alpha}{8}}_{\frac{4}{3}, 2}(\mathbb{R})\right).
\end{equation}

We end this section with some reductions. Indeed, besides the bilinear estimates \eqref{D} proved by a physical space approach, the rest steps of the proof for Theorem \ref{thm-mKdV} and \ref{thm-mBO} is standard. 

Without loss of generality, it suffices to consider the proof of Theorem \ref{thm-mKdV}, and Theorem \ref{thm-mBO} follows similarly after the bilinear estimate is proved (see Theorem \ref{be-BO}).
It is clear that Theorem \ref{thm-mKdV} follows immediately from the proposition below:

\begin{proposition}\label{prop4.1}
    Let $\{u^{(j)}\}$ be the iteration sequence defined by \eqref{31A}. Then for all $j = 0, 1, 2, \cdots$, there exists $T = T\left(\Vert \phi\Vert_{H^{\frac{1}{4}}(\mathbb{R})}\right) > 0$ and constant $C > 0$ such that 
    \begin{gather}
        \label{37}\Vert u^{(j)}\Vert_{S(T)} \leq 2C\Vert \phi\Vert_{H^{\frac{1}{4}}(\mathbb{R})}, \\ 
        \label{38}\Vert \partial_x((u^{(j)})^3)\Vert_{N(T)} \leq \Vert \phi\Vert_{H^{\frac{1}{4}}(\mathbb{R})}, \\
        \label{39}\Vert u^{(j + 2)} - u^{(j + 1)}\Vert_{S(T)} \leq \frac{1}{2}\Vert u^{(j + 1)} - u^{(j)}\Vert_{S(T)}, \\ 
        \label{40A}\Vert \partial_x((u^{(j + 2)})^3 - (u^{(j + 1)})^3)\Vert_{N(T)} \leq \frac{1}{2}\Vert \partial_x((u^{(j + 1)})^3 - (u^{(j)})^3)\Vert_{N(T)}.
    \end{gather}
    where 
    \begin{gather*}
        \Vert \cdot \Vert_{S(T)} := \Vert \cdot \Vert_{L^{\infty}\left(0, T; H^{\frac{1}{4}}(\mathbb{R})\right)} + \Vert \cdot \Vert_{L^8\left(0, T; B^{\frac{3}{8}}_{4, 2}(\mathbb{R})\right)}, \\ 
        \Vert \cdot \Vert_{N(T)} := \Vert \cdot \Vert_{L^{\frac{8}{7}}\left(0, T; B^{\frac{1}{8}}_{\frac{4}{3}, 2}(\mathbb{R})\right)}.
    \end{gather*}
    Moreover, the bilinear estimate 
    \begin{equation}\label{BE1}
        \Vert u_{\mu}^{(j)}u_{\lambda}^{(j)}\Vert_{L^2(0, T; L^2(\mathbb{R}))} \lesssim \lambda^{-1}\Vert u_{\mu}^{(j)}(0)\Vert_{L^2(\mathbb{R})}\Vert u_{\lambda}^{(j)}(0)\Vert_{L^2(\mathbb{R})}.
    \end{equation}
    holds for all $j = 0, 1, 2, \cdots$, where $\mu$ and $\lambda$ are dyadic numbers satisfying $\mu \ll \lambda$.
\end{proposition}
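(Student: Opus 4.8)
The plan is to run the Picard iteration and prove \eqref{37}, \eqref{38} and the bilinear bound \eqref{BE1} together by induction on $j$; the contraction estimates \eqref{39}, \eqref{40A} will then come out of the same machinery applied to the difference $d^{(j)}:=u^{(j+1)}-u^{(j)}$. Three inputs are used. First, the homogeneous and inhomogeneous Strichartz estimates of Section~\ref{S2}, which give $\Vert U(t)\phi\Vert_{S(T)}\le C\Vert\phi\Vert_{H^{1/4}}$ and $\bigl\Vert\int_0^tU(t-\tau)F(\tau)\,{\rm d}\tau\bigr\Vert_{S(T)}\le C\Vert F\Vert_{N(T)}$, turning the Duhamel formula \eqref{31A} into an estimate once the nonlinearity is controlled in $N(T)$. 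Second, the bilinear estimate \eqref{D} of Section~\ref{S4} for free Airy solutions, together with an inhomogeneous version of the div-curl argument valid for solutions of $\partial_tv+\partial_x^3v=F$, in which both the data $v(0)$ and the forcing $F$ appear on the right-hand side. Third, a \emph{trilinear estimate}: for functions $v_1,v_2,v_3$ whose dyadic pieces satisfy $\Vert v_{i,\mu}v_{k,\lambda}\Vert_{L^2_{t,x}}\lesssim\lambda^{-1}\Vert v_{i,\mu}(0)\Vert_{L^2}\Vert v_{k,\lambda}(0)\Vert_{L^2}$ whenever $\mu\ll\lambda$, one has
\[
    \Vert\partial_x(v_1v_2v_3)\Vert_{N(T)}\ \lesssim\ T^{\theta}\,\Vert v_1\Vert_{S(T)}\Vert v_2\Vert_{S(T)}\Vert v_3\Vert_{S(T)}\qquad(\theta>0),
\]
and it is only in the proof of this estimate that the bilinear bound and the shortness of $[0,T]$ enter.

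I would prove the trilinear estimate by an inhomogeneous Littlewood--Paley decomposition of each $v_i$ and of the output: fix the output frequency $N$ and input frequencies $\lambda_1\ge\lambda_2\ge\lambda_3$, so that $\lambda_1\gtrsim N$, and account for the factor $\sim N$ produced by $\partial_x$. When $\lambda_3\ll\lambda_1$ one pairs the highest with the lowest input: H\"older gives $\Vert v_{1,\lambda_1}v_{2,\lambda_2}v_{3,\lambda_3}\Vert_{L^{8/7}_tL^{4/3}_x}\le\Vert v_{1,\lambda_1}v_{3,\lambda_3}\Vert_{L^2_{t,x}}\Vert v_{2,\lambda_2}\Vert_{L^{8/3}_tL^4_x}$, the bilinear bound contributes $\lambda_1^{-1}\sim N^{-1}$, the remaining factor goes into the $L^8_tB^{3/8}_{4,2}$ part of $S(T)$ at the cost of a power $T^{\theta}$ from H\"older in time, and the dyadic sums converge because $\sum_{\mu}\mu^{-1/2}<\infty$ (dyadic sum) closes the gap between the $L^2$ norms produced by the bilinear bound and the $H^{1/4}$-weights. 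When $\lambda_1\sim\lambda_2\sim\lambda_3$ the bilinear bound is not directly available, but three Strichartz factors suffice: the scaling count $3\cdot\tfrac38-1=\tfrac18$ matches exactly the $B^{1/8}_{4/3,2}$-regularity of $N(T)$, again with a gain $T^{\theta}$.

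For the induction, the base case $j=0$ is immediate: $u^{(0)}=U(t)\phi$ is free, so \eqref{37} holds with the Strichartz constant and \eqref{BE1} is exactly \eqref{D}, while \eqref{38} follows from the trilinear estimate once $T=T(\Vert\phi\Vert_{H^{1/4}})$ is taken small enough that its right-hand side is $\le\Vert\phi\Vert_{H^{1/4}}$ whenever the $S(T)$-norm is $\le2C\Vert\phi\Vert_{H^{1/4}}$. For the step $j\to j+1$, assume \eqref{37}, \eqref{38}, \eqref{BE1} at all levels $\le j$: Strichartz applied to \eqref{31A} together with \eqref{38} at level $j$ gives $\Vert u^{(j+1)}\Vert_{S(T)}\le C\Vert\phi\Vert_{H^{1/4}}+C\Vert\phi\Vert_{H^{1/4}}$, which is \eqref{37} at level $j+1$; one must then establish \eqref{BE1} at level $j+1$; and finally \eqref{38} at level $j+1$ follows from the trilinear estimate fed with \eqref{37} and \eqref{BE1} at level $j+1$. \textbf{The main obstacle is the middle step, propagating the bilinear estimate to the non-free iterates.} Since $u^{(j+1)}$ solves the inhomogeneous Airy equation with forcing $-\partial_x((u^{(j)})^3)$, one applies the inhomogeneous div-curl estimate to $v=u^{(j+1)}$: the data term is $u^{(j+1)}(0)=\phi$, producing the desired right-hand side $\lambda^{-1}\Vert\phi_\mu\Vert_{L^2}\Vert\phi_\lambda\Vert_{L^2}$, and the forcing term is absorbed using \eqref{38} at level $j$, which bounds every dyadic block of $\partial_x((u^{(j)})^3)$ by $\Vert\phi\Vert_{H^{1/4}}$; the $\lambda^{-1}$-gain survives because the Duhamel correction still carries the Airy dispersion. (Equivalently one may split $u^{(j+1)}=U(t)\phi+w$, apply \eqref{D} to the free part, and estimate $w_\mu w_\lambda$ and the cross terms directly from Strichartz, Bernstein and \eqref{38}.)

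Finally, the contraction estimates are obtained from the difference equation $\partial_td^{(j)}+\partial_x^3d^{(j)}=-\partial_x\bigl((u^{(j)})^3-(u^{(j-1)})^3\bigr)$ with zero data: writing $d^{(j+1)}=-\int_0^tU(t-\tau)\partial_x\bigl((u^{(j+1)})^3-(u^{(j)})^3\bigr)\,{\rm d}\tau$ and factoring $(u^{(j+1)})^3-(u^{(j)})^3=d^{(j)}\bigl((u^{(j+1)})^2+u^{(j+1)}u^{(j)}+(u^{(j)})^2\bigr)$, one runs the trilinear estimate with one factor equal to $d^{(j)}$ and the other two genuine iterates, using \eqref{BE1} (already in hand) to pair two iterates and, in the configuration where $d^{(j)}$ must be paired with an iterate, the cross-bilinear estimate $\Vert d^{(j)}_\mu u^{(k)}_\lambda\Vert_{L^2_{t,x}}\lesssim\lambda^{-1}\Vert u^{(k)}_\lambda(0)\Vert_{L^2}\cdot(\text{data and forcing of }d^{(j)}\text{ at frequency }\mu)$ coming from the same inhomogeneous div-curl argument, whose forcing contribution is controlled within the induction via \eqref{40A}. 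This yields $\Vert d^{(j+1)}\Vert_{S(T)}\le CT^{\theta}\Vert\phi\Vert_{H^{1/4}}^2\Vert d^{(j)}\Vert_{S(T)}$; shrinking $T$ once more makes the constant $\le\tfrac12$, which is \eqref{39}, and \eqref{40A} is this inequality read off at the level of the nonlinearity. Since $T$ is adjusted only finitely many times, it still depends only on $\Vert\phi\Vert_{H^{1/4}}$, as claimed.
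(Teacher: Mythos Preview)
Your proposal is correct and follows essentially the same route as the paper: induction on $j$, with the Strichartz estimate \eqref{SE-mKdV} providing \eqref{37}, the inhomogeneous div--curl argument (forcing $G^i=-u^{(k)}_\nu P_\nu\partial_x((u^{(k-1)})^3)$ controlled in $L^1_{t,x}$ by H\"older against the $S(T)$ and $N(T)$ norms) propagating the bilinear bound \eqref{BE1}, and a trilinear estimate via Littlewood--Paley decomposition yielding \eqref{38}; the contraction \eqref{39}--\eqref{40A} is then the same machinery applied to differences. The only cosmetic difference is that the paper organizes the trilinear decomposition by the position of the \emph{output} frequency relative to the inputs (the three cases $I_{2a},I_{2b},I_{2c}$, with Lemma~\ref{lma4.3} handling the high--output case), whereas you split by whether the input frequencies are separated or all comparable; both partitions close, and your claim ``$\lambda_1^{-1}\sim N^{-1}$'' should really read $\lambda_1^{-1}\lesssim N^{-1}$, which only helps.
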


\subsection{Outline of the paper}

The remaining part of this paper is organized as follows: In Section \ref{S2} we will introduce some notations.
In Section \ref{S3}, we will introduce the function spaces, including Littlewood-Paley theory, Besov spaces and Strichartz estimate.
In Section \ref{S4} we use the div-curl lemma to give a physical space proof of the bilinear estimates \eqref{D}. 
Finally, in Section \ref{S5} we will complete the proof of Proposition \ref{prop4.1} using dyadic decomposition and the bilinear estimates proved in Section \ref{S3}. 

\section{Notations}\label{S2}

We write $X \lesssim Y$ to indicate that $X \leq CY$, and $X \gtrsim Y$ to indicate that $X \geq CY$ for some positive constant $C$.
The symbol $X \sim Y$ implies that $X \lesssim Y$ and $X \gtrsim Y$.
Denote by $\mathbb{N} := \{0, 1, 2, \cdots\}$ the natural numbers set.
The Greek letters $\lambda$, $\mu$ and $\sigma$ are used to denote dyadic numbers in this paper.

We use $L^p, \ell^p$ to denote the usual Lebesgue spaces, and $W^{k, p}, H^k := W^{k, 2}$ to denote the usual Sobolev spaces.
The spacetime norm $\Vert \cdot\Vert_{L_t^qL_x^p}$ is defined by 
\begin{equation*}
    \Vert u\Vert_{L_t^qL_x^p} = \Vert u\Vert_{L^q(0, T; L^p(\mathbb{R}))} := \left(\int_0^T\left(\int_{\mathbb{R}}|u|^p \,{\rm d}x\right)^{\frac{q}{p}} \,{\rm d}t\right)^{\frac{1}{q}}.
\end{equation*}
When $p = q$, we will also abbreviate $L_t^pL_x^p$ as $L_{t, x}^p$.

Given a function $u$, the (spatial) Fourier transform of $u$ is defined by 
\begin{equation*}
    \widehat{u}(\xi) = \mathscr{F}_xu(\xi) := \int_{\mathbb{R}}\e^{-\mathrm{i}x\xi}u(x) \,{\rm d}x,
\end{equation*}
and the symbol $\mathscr{F}_{\xi}^{-1}$ denotes the Fourier inversion with respect to the variable $\xi$:
\begin{equation*}
    \mathscr{F}_{\xi}^{-1}u(\xi) := \frac{1}{2\pi}\int_{\mathbb{R}}\e^{\ii x\xi}u(\xi) \,{\rm d}\xi.
\end{equation*}
Let $D_x^s$ be the Fourier multiplier with symbol $|\xi|^s$. Since $\widehat{\mathcal{H}u}(\xi) = -\ii\ {\rm sgn}(\xi)\widehat{u}(\xi)$, where $\mathcal{H}$ denotes the Hilbert transform, it follows that $D_x = \partial_x \circ \mathcal{H} = \mathcal{H} \circ \partial_x$.

For the free Airy equation
\begin{equation}\label{Airy}
    \partial_tu + \partial_x^3u = 0,
\end{equation}
we denote by 
\begin{equation*}
    u(t) := U(t)\phi
\end{equation*}
the solution of \eqref{Airy} with initial data $\phi$, where
\begin{equation*}
    U(t)\colon \phi \mapsto \frac{1}{2\pi}\int_{\mathbb{R}}\int_{\mathbb{R}}\e^{\ii x\xi}\e^{\ii t\xi^3}\widehat{\phi}(\xi) \,{\rm d}x{\rm d}\xi.
\end{equation*}
It follows from Duhamel's principle that the solution of 
\begin{equation*}
    \begin{cases}
        \partial_tu + \partial_x^3u + F = 0, \\ 
        t = 0\colon u = \phi
    \end{cases}
\end{equation*}
is given by 
\begin{equation*}
    u(t) = U(t)\phi - \int_0^tU(t - \tau)F(\tau) \,{\rm d}\tau.
\end{equation*}

\section{Function Spaces}\label{S3}

In this section, we will introduce the Besov spaces and the Strichartz estimates on it.

\subsection{Littlewood-Paley theory and the Besov spaces}\label{S3.1}

Let $\psi$ be a smooth non-neagtive radial function on $\mathbb{R}$ such that ${\rm supp}\ \psi \subseteq \{8/9 \leq |\xi| \leq 9/8\}$.
Define $\varphi(\xi) = \psi(\xi/2) - \psi(\xi)$, then the support of $\varphi$ is contained in $ \{8/9 \leq |\xi| \leq 9/4\}$, and we have  
\begin{equation*}
    \psi(\xi) + \sum_{\lambda \geq 1}\varphi(\lambda^{-1}\xi) = 1, \qquad \forall \xi.
\end{equation*}
For dyadic numbers $\mu$ and $\lambda$, we define the operators $P_{\lambda}$ and $S_{\lambda}$ by
\begin{equation*}
    P_{\lambda}\colon u \mapsto \mathscr{F}_{\xi}^{-1}(\varphi(\lambda^{-1}\cdot)\mathscr{F}_xu) =: u_{\lambda}, \quad S_{\lambda} = \sum_{\mu < \lambda}P_{\mu}.
\end{equation*}
Denote by $u_{\lambda} := P_{\lambda}u$. It follows from the definition above that ${\rm supp}\ \widehat{u_{\lambda}} \subseteq \{\xi\colon |\xi| \sim \lambda\}$.
We will frequently use the following \emph{Bernstein's inequality} in this paper: For $s \in \mathbb{R}$ and $1 \leq p \leq \infty$, it holds that 
\begin{equation*}
    \Vert D_x^su_{\lambda}\Vert_{L_x^p} \sim \lambda^s\Vert u_{\lambda}\Vert_{L_x^p}, \qquad \forall \lambda\ {\rm dyadic}.
\end{equation*}

The (inhomogeneous) dyadic blocks are defined by 
\begin{equation*}
    \Delta_{\lambda}u := 
    \begin{cases}
        u_{\lambda} \quad &\lambda \geq 1, \\ 
        S_1u \quad &\lambda = 1/2, \\ 
        0 \quad &\lambda < 1/2.
    \end{cases}
\end{equation*}
Then we have the (inhomogeneous) Littlewood-Paley decomposition (that we will use throughout this paper):
\begin{equation*}
    u = \sum\Delta_{\lambda}u.
\end{equation*}

For $s \in \mathbb{R}, 1 \leq p, q \leq \infty$, the (inhomogeneous) Besov norm is defined by
\begin{align*}
    \Vert u\Vert_{B^s_{p, q}(\mathbb{R})} &:= \Vert \{\lambda^s\Vert \Delta_{\lambda}u\Vert_{L^p(\mathbb{R})}\}\Vert_{\ell^q} \\
    &= \Vert S_1u\Vert_{L^p(\mathbb{R})} + 
    \begin{cases}
        \displaystyle \left(\sum_{\lambda \geq 1}(\lambda^s\Vert u_{\lambda}\Vert_{L^p(\mathbb{R})})^q\right)^{1/q} \quad &q < \infty, \\ 
        \sup_{\lambda \geq 1}\lambda^s\Vert u_{\lambda}\Vert_{L^p(\mathbb{R})} \quad &q = \infty.
    \end{cases}
\end{align*}
The Besov spaces $B^s_{p, q}(\mathbb{R})$ is then defined as the completion of Schwartz functions with respect to the norm $\Vert \cdot\Vert_{B^s_{p, q}(\mathbb{R})}$.
Here are some basic properties of Besov spaces (one may refer to \cite{BCD1} for the proof):
\begin{itemize}
    \item The following inclusions hold:
    \begin{equation}\label{ab}
        B^{s_1}_{p, q} \hookrightarrow B^{s_2}_{p, q}\ {\rm when} \ s_1 \geq s_2, \qquad B^s_{p, q_1} \hookrightarrow B^s_{p, q_2}\ {\rm when}\ q_1 \leq q_2;
    \end{equation}
    \item For any $s \in \mathbb{R}$, we have $H^s = B^s_{2, 2}$.
\end{itemize}

\subsection{Strichartz estimates}

Next we recall the Strichartz estimate (see \cite{KPV4, MT} and the references therein).

\begin{theorem}[Strichartz estimate]\label{ST}
    Suppose $0 \leq \alpha \leq 1$. Let $u$ be a solution of the following IVP:
    \begin{equation*}
        \begin{cases}
            \partial_tu + D^{1 + \alpha}\partial_xu + F = 0, \quad u\colon [0, T] \times \mathbb{R} \rightarrow \mathbb{R}, \\ 
            t = 0\colon u = \phi.
        \end{cases}
    \end{equation*}
    Then for all $0 \leq \theta \leq 1$ we have 
    \begin{equation*}
        \Vert u \Vert_{L^{\infty}_tH^s_x} + \Vert u \Vert_{L^{4/\theta}_tB^r_{2/(1 - \theta), 2}} \lesssim \Vert \phi\Vert_{H^s_x} + \Vert F\Vert_{L^{(4/\theta)'}_tB^{r^*}_{(2/(1 - \theta))', 2}},
    \end{equation*}
    where $r \in \mathbb{R}, s = r - \alpha\theta/4, r^* = r - \alpha\theta/2$, and $p'$ of number is conjugate of $p \in [1, \infty]$ given by $1/p + 1/p' = 1$. 
\end{theorem}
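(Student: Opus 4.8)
The plan is to prove the estimate one frequency block at a time and then reassemble it in the Besov space, treating the homogeneous and inhomogeneous parts separately. Write $U(t) = \e^{-tD_x^{1+\alpha}\partial_x}$ for the free propagator, so that $\widehat{U(t)\phi}(\xi) = \e^{-\ii t\xi|\xi|^{1+\alpha}}\widehat{\phi}(\xi)$, and by Duhamel $u(t) = U(t)\phi - \int_0^tU(t-\tau)F(\tau)\,{\rm d}\tau$. On a dyadic block $|\xi|\sim\lambda$ the phase $\xi|\xi|^{1+\alpha}$ has second derivative of size $\sim\lambda^{\alpha}$, which stays away from zero on the annulus for every $\alpha\in[0,1]$; hence van der Corput's lemma applied to the kernel of $P_{\lambda}U(t)$ gives the dispersive bound $\Vert P_{\lambda}U(t)\phi\Vert_{L^{\infty}_x}\lesssim (|t|\lambda^{\alpha})^{-1/2}\Vert\phi\Vert_{L^1_x}$. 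Interpolating this with the trivial $L^2_x\to L^2_x$ unitarity yields, for $0\le\theta\le1$ and $p:=2/(1-\theta)$, the estimate $\Vert P_{\lambda}U(t)\phi\Vert_{L^{p}_x}\lesssim (|t|\lambda^{\alpha})^{-\theta/2}\Vert\phi\Vert_{L^{p'}_x}$.

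From this decay estimate I would run the $TT^*$ argument with $T_{\lambda}\phi = P_{\lambda}U(t)\phi$, so that $T_{\lambda}T_{\lambda}^*F(t) = \int P_{\lambda}U(t-\tau)F(\tau)\,{\rm d}\tau$. Applying the Hardy--Littlewood--Sobolev inequality in the time variable to the kernel $\lambda^{-\alpha\theta/2}|t-\tau|^{-\theta/2}$ produces $\Vert T_{\lambda}T_{\lambda}^*F\Vert_{L^{4/\theta}_tL^{p}_x}\lesssim\lambda^{-\alpha\theta/2}\Vert F\Vert_{L^{(4/\theta)'}_tL^{p'}_x}$, where the exponent $4/\theta$ is forced by the scaling relation $2/q=\theta/2$. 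Taking square roots gives the single-block homogeneous bound $\Vert P_{\lambda}U(t)\phi\Vert_{L^{4/\theta}_tL^p_x}\lesssim\lambda^{-\alpha\theta/4}\Vert\phi\Vert_{L^2_x}$. Multiplying by $\lambda^r$, using $r-\alpha\theta/4 = s$, and summing in $\ell^2_{\lambda}$ — where Minkowski's inequality is used in the favorable direction since $4/\theta\ge2$ — assembles $\Vert U(t)\phi\Vert_{L^{4/\theta}_tB^r_{p,2}}\lesssim\Vert\phi\Vert_{H^s}$, while the $L^{\infty}_tH^s$ component is just the unitarity of $U(t)$ on $H^s$.

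For the inhomogeneous term I would deliberately avoid summing block by block, since Minkowski's inequality runs the wrong way on the source side and would replace the stated $L^{(4/\theta)'}_tB^{r^*}_{p',2}$ norm by the larger $\ell^2_{\lambda}L^{(4/\theta)'}_t$ norm. Instead I would deduce the inhomogeneous estimate by duality from the homogeneous one alone. Factoring $U(t-\tau) = U(t)U(-\tau)$, I consider the non-retarded operator $U(t)g$ with $g = \int_{\mathbb{R}}U(-\tau)F(\tau)\,{\rm d}\tau$; by the homogeneous estimate it suffices to show $\Vert g\Vert_{H^s}\lesssim\Vert F\Vert_{L^{(4/\theta)'}_tB^{r^*}_{p',2}}$. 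Writing $\Vert g\Vert_{H^s} = \sup_{\Vert h\Vert_{H^{-s}}\le1}\langle g,h\rangle$ and using unitarity converts the pairing into $\int\langle F(\tau),U(\tau)h\rangle\,{\rm d}\tau$, which by Besov duality in $x$ and H\"older in $t$ is bounded by $\Vert F\Vert_{L^{(4/\theta)'}_tB^{r^*}_{p',2}}\Vert U(\cdot)h\Vert_{L^{4/\theta}_tB^{-r^*}_{p,2}}$; the homogeneous estimate applied with regularity index $-r^*$ then controls the last factor by $\Vert h\Vert_{H^{-s}}$, precisely because $-r^*-\alpha\theta/4 = -s$. This gives the non-retarded inhomogeneous estimate with exactly the stated orderings, and the retarded integral $\int_0^t$ is recovered from it by the Christ--Kiselev lemma, applicable since $4/\theta>(4/\theta)'$ for $\theta\in(0,1]$; the $L^{\infty}_tH^s$ bound on the Duhamel term follows from the same duality computation uniformly in $t$, and the endpoint $\theta=0$ reduces to the elementary energy estimate.

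I expect the main obstacle to be the inhomogeneous estimate in the precise form stated: the block-by-block route loses the $L^{(4/\theta)'}_t(\ell^2_{\lambda})$ ordering of the source norm, so the duality reduction to the homogeneous estimate — crucially invoking it at two different regularity indices $r$ and $-r^*$ — is the step that makes the norms match. A secondary point requiring care is verifying the van der Corput hypothesis uniformly across $\alpha\in[0,1]$ with the sharp power $\lambda^{\alpha}$, and checking the behavior at the endpoints $\theta=0$ and $\theta=1$ (where $p=\infty$) separately.
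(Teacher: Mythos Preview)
The paper does not prove this theorem; it is merely recalled with a citation (``see \cite{KPV4, MT} and the references therein''), so there is no argument in the paper to compare against.

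Your sketch is the standard proof and is essentially correct. The phase $\xi|\xi|^{1+\alpha}$ has second derivative $(1+\alpha)(2+\alpha)|\xi|^{\alpha}\,{\rm sgn}(\xi)$, hence of size $\sim\lambda^{\alpha}$ on $|\xi|\sim\lambda$ uniformly in $\alpha\in[0,1]$, so van der Corput gives the block-localized dispersive bound you state; the $TT^*$ argument with Hardy--Littlewood--Sobolev in time forces $q=4/\theta$ and yields the smoothing factor $\lambda^{-\alpha\theta/4}$ on a single block. The assembly into the Besov norm via Minkowski (legitimate since $4/\theta\ge2$) and the duality treatment of the inhomogeneous term are both right; in particular your observation that a block-by-block argument on the Duhamel term would place the $\ell^2_{\lambda}$ outside the $L^{(4/\theta)'}_t$ and hence fail to produce the stated norm is the key point, and the remedy --- dualizing the homogeneous estimate at index $-r^*$ and using $-r^*-\alpha\theta/4=-s$ --- is exactly how one recovers the correct $L^{(4/\theta)'}_tB^{r^*}_{p',2}$ source space. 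The invocation of Christ--Kiselev to pass to the retarded integral is valid since $4/\theta>(4/\theta)'$ for $\theta\in(0,1]$. The only loose end is the low-frequency block $S_1u$ in the inhomogeneous decomposition, but this is harmless: Bernstein plus energy handles it directly.
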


To determine the integrability and regularity indices ($\theta$ and $r$) in our application, we employ a herusitic argument as follows:
On the one hand, balancing the integrability with respect to variable $x$ between $u$ and the nonlinearity $\partial_x(u^3)$ leads to 
\begin{equation*}
    \frac{2}{1 - \theta} = 3\left(\frac{2}{1 - \theta}\right)' \Longrightarrow \theta = \frac{1}{2}.
\end{equation*} 
Note that the integrability for time variable $t$ can then be balanced by H\"older's inequality since when $\theta = 1/2$, we have 
\begin{equation*}
    8 = \frac{4}{\frac{1}{2}} > 3 \cdot \left(\frac{4}{\frac{1}{2}}\right)' = \frac{24}{7}.
\end{equation*}
On the other hand, to spread the derivatives in the nonlinearity $\partial_x(u^3)$ and then balance the regularity, it follows that (take $\theta = 1/2$)
\begin{equation*}
    r = \frac{r^* + 1}{3} = \frac{1}{3}\left(r - \frac{\alpha}{4} + 1\right) \Longrightarrow r = \frac{1}{2} - \frac{\alpha}{8}.
\end{equation*}
To conclude, the Strichartz estimate we will use in this paper is given by:
\begin{equation*}
    \Vert u \Vert_{L^{\infty}_tH^{\frac{1}{2} - \frac{\alpha}{4}}_x} + \Vert u \Vert_{L^8_tB^{\frac{1}{2} - \frac{\alpha}{8}}_{4, 2}} \lesssim \Vert \phi\Vert_{H^{\frac{1}{2} - \frac{\alpha}{4}}_x} + \Vert F\Vert_{L^{\frac{8}{7}}_tB^{\frac{1}{2} - \frac{3\alpha}{8}}_{\frac{4}{3}, 2}},
\end{equation*}
which also imply the regularity of the solution as stated in \eqref{abc}.

Particularly, in the case $\alpha = 1$ we have  
\begin{equation}\label{SE-mKdV}
    \Vert u\Vert_{L^{\infty}_tH^{\frac{1}{4}}_x} + \Vert u\Vert_{L^8_tB^{\frac{3}{8}}_{4, 2}} \lesssim \Vert \phi\Vert_{H^{\frac{1}{4}}_x} + \Vert F\Vert_{L^{\frac{8}{7}}_tB^{\frac{1}{8}}_{\frac{4}{3}, 2}};
\end{equation}
and for $\alpha = 0$, it follows that 
\begin{equation}\label{SE-mBO}
    \Vert u\Vert_{L^{\infty}_tH^{\frac{1}{2}}_x} + \Vert u\Vert_{L^8_tB^{\frac{1}{2}}_{4, 2}} \lesssim \Vert \phi\Vert_{H^{\frac{1}{2}}_x} + \Vert F\Vert_{L^{\frac{8}{7}}_tB^{\frac{1}{2}}_{\frac{4}{3}, 2}}.
\end{equation} 

\section{Div-Curl Lemma and Bilinear Estimates}\label{S4}

In this section, we give a physical space proof for the bilinear estimates \eqref{D}. 
The main tool is the following div-curl lemma. 

\begin{lemma}[Div-curl lemma]
    Suppose that
\begin{equation*}
    \begin{cases}
        \partial_tf^{11} + \partial_xf^{12} = G^1, \\ 
        \partial_tf^{21} - \partial_xf^{22} = G^2,
    \end{cases}
    \qquad (t, x) \in [0, T] \times \mathbb{R},
\end{equation*}
and $f^{ij} \rightarrow 0, x \rightarrow +\infty, i, j = 1, 2$. 
Then we have 
\begin{equation}\label{d-c}
\begin{aligned}
    \int_0^T\int_{\mathbb{R}}f^{11}f^{22} + f^{12}f^{21} \lesssim \left(\Vert f^{11}(0)\Vert_{L^1_x} + \Vert f^{11}\Vert_{L^{\infty}_tL^1_x} + \Vert G^1\Vert_{L^1_{t, x}}\right)\left(\Vert f^{21}(0)\Vert_{L^1_x} + \Vert f^{21}\Vert_{L^{\infty}_tL^1_x} + \Vert G^2\Vert_{L^1_{t, x}}\right),
\end{aligned}
\end{equation}
provided that the right-side is bounded.
\begin{proof}
    See \cite{WZ3}.
\end{proof}
\end{lemma}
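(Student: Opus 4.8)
The plan is to recover $\int_0^T\int_{\mathbb{R}}(f^{11}f^{22}+f^{12}f^{21})$ as a spacetime divergence — whose integral is a boundary term — plus two explicit lower-order errors, by passing to spatial primitives adapted to the decay at $x=+\infty$; this is the usual compensated-compactness argument made quantitative. First I would reduce to the case in which all $f^{ij}$ and $G^i$ are smooth and Schwartz in $x$, the general case following by a routine approximation once the $L^1$-type bounds below are in hand. Note that boundedness of the right-hand side forces $f^{11}(t,\cdot),\,f^{21}(t,\cdot),\,G^1(t,\cdot),\,G^2(t,\cdot)\in L^1_x$ for a.e.\ $t\in[0,T]$, with the corresponding suprema over $t$ finite, so the objects introduced below are well defined.

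Introduce the potentials and integrated sources
\[
\phi^i(t,x):=-\int_x^{+\infty}f^{i1}(t,y)\,{\rm d}y,\qquad \Gamma^i(t,x):=\int_x^{+\infty}G^i(t,y)\,{\rm d}y\qquad(i=1,2),
\]
so that $\partial_x\phi^i=f^{i1}$, $\phi^i(t,+\infty)=\Gamma^i(t,+\infty)=0$, and $\|\phi^i(t)\|_{L^\infty_x}\le\|f^{i1}(t)\|_{L^1_x}$, $\|\Gamma^i(t)\|_{L^\infty_x}\le\|G^i(t)\|_{L^1_x}$. Feeding $f^{11}=\partial_x\phi^1$ into $\partial_tf^{11}+\partial_xf^{12}=G^1$ shows that $\partial_t\phi^1+f^{12}+\Gamma^1$ is independent of $x$; letting $x\to+\infty$ (where all three summands vanish) gives $f^{12}=-\partial_t\phi^1-\Gamma^1$, and in exactly the same way $f^{22}=\partial_t\phi^2+\Gamma^2$. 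Substituting,
\[
f^{11}f^{22}+f^{12}f^{21}=\big(\partial_x\phi^1\,\partial_t\phi^2-\partial_t\phi^1\,\partial_x\phi^2\big)+\partial_x\phi^1\,\Gamma^2-\Gamma^1\,\partial_x\phi^2,
\]
and the key structural point is that the bracketed term is a null form,
\[
\partial_x\phi^1\,\partial_t\phi^2-\partial_t\phi^1\,\partial_x\phi^2=\partial_x\!\big(\phi^1\partial_t\phi^2\big)-\partial_t\!\big(\phi^1\partial_x\phi^2\big),
\]
a pure spacetime divergence.

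It then remains to integrate over $[0,T]\times\mathbb{R}$ and estimate the boundary contributions. The $\partial_t$-divergence produces the time-boundary term $-\int_{\mathbb{R}}(\phi^1f^{21})(T)\,{\rm d}x+\int_{\mathbb{R}}(\phi^1f^{21})(0)\,{\rm d}x$, bounded by $\|f^{11}\|_{L^\infty_tL^1_x}\|f^{21}\|_{L^\infty_tL^1_x}+\|f^{11}(0)\|_{L^1_x}\|f^{21}(0)\|_{L^1_x}$ via the pointwise bound on $\phi^1$; the two error terms are controlled by $\|f^{11}\|_{L^\infty_tL^1_x}\|G^2\|_{L^1_{t,x}}$ and $\|G^1\|_{L^1_{t,x}}\|f^{21}\|_{L^\infty_tL^1_x}$ using the $L^\infty_x$ bounds on $\Gamma^i$. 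The $\partial_x$-divergence gives a spatial-boundary term which vanishes at $x=+\infty$ since $\phi^1\to0$ and $\partial_t\phi^2=f^{22}-\Gamma^2\to0$ there. The one genuinely delicate piece — and what I expect to be the main obstacle — is the contribution at $x=-\infty$, namely $-\int_0^T(\phi^1\partial_t\phi^2)(t,-\infty)\,{\rm d}t$, because $\phi^1(t,-\infty)=-\int_{\mathbb{R}}f^{11}(t,y)\,{\rm d}y$ need not vanish; here one uses the mass-balance identity obtained by integrating $\partial_tf^{21}-\partial_xf^{22}=G^2$ over $x\in\mathbb{R}$, namely $\partial_t\phi^2(t,-\infty)=-\frac{{\rm d}}{{\rm d}t}\int_{\mathbb{R}}f^{21}(t,y)\,{\rm d}y=-\int_{\mathbb{R}}G^2(t,y)\,{\rm d}y$, whence this term is bounded by $\|f^{11}\|_{L^\infty_tL^1_x}\|G^2\|_{L^1_{t,x}}$. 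Collecting the four estimates and observing that each is a product of one factor among $\{\|f^{11}(0)\|_{L^1_x},\,\|f^{11}\|_{L^\infty_tL^1_x},\,\|G^1\|_{L^1_{t,x}}\}$ and one among $\{\|f^{21}(0)\|_{L^1_x},\,\|f^{21}\|_{L^\infty_tL^1_x},\,\|G^2\|_{L^1_{t,x}}\}$ yields \eqref{d-c}. The only non-routine steps are justifying the integrations by parts when the data are merely $L^1$ (rather than decaying) at $x=-\infty$ and the passage from Schwartz data to the general case, both handled by the preliminary truncation/approximation; applying the conclusion also to $(-f^{21},-f^{22},-G^2)$ gives the matching reverse bound if desired.
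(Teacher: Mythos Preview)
The paper does not actually prove this lemma; it simply cites \cite{WZ3}. Your proposal therefore supplies a self-contained argument where the paper gives none, so there is no in-text proof to compare it against.

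The strategy you outline --- passing to the primitives $\phi^i$, expressing $f^{12}$ and $f^{22}$ in terms of $\partial_t\phi^i$ and $\Gamma^i$, recognising the null form $\partial_x\phi^1\partial_t\phi^2-\partial_t\phi^1\partial_x\phi^2$ as a pure divergence, and then estimating boundary and error terms --- is the standard physical-space proof of this type of compensated-compactness estimate and is essentially correct. One small point worth sharpening: in the step $\partial_t\phi^2(t,-\infty)=-\int_{\mathbb{R}}G^2(t,y)\,{\rm d}y$ you are implicitly using $f^{22}(t,-\infty)=0$, which is true in your Schwartz reduction but is \emph{not} part of the stated hypotheses (only decay at $+\infty$ is assumed). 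Since the identity $f^{22}=\partial_t\phi^2+\Gamma^2$ already expresses $f^{22}$ entirely through quantities controlled by the right-hand side, the approximation step can indeed be closed, but you should make this explicit rather than fold it into ``routine''.
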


We now turn to the proof of bilinear estimates \eqref{D}. Firstly, we consider the case of mKdV equation \eqref{mKdV}.

\begin{proposition}[Derivatives transferred from high-frequency to low-frequency]\label{prop}
    Let $u$ satisfy the free Airy equation $\partial_tu + \partial_x^3u = 0$.
    Then for any dyadic numbers $\lambda$ and $\mu$, it holds that
    \begin{equation*}
        \Vert u_{\mu}\partial_xu_{\lambda}\Vert_{L_{t, x}^2}^2 - \Vert u_{\lambda}\partial_xu_{\mu}\Vert_{L_{t, x}^2}^2 \lesssim \Vert u_{\mu}(0)\Vert_{L_x^2}^2\Vert u_{\lambda}(0)\Vert_{L_x^2}^2.
    \end{equation*}
    \begin{proof}
        Since $P_{\mu}$ and $P_{\lambda}$ commute with $\partial$, it follows that $u_{\mu}$ and $u_{\lambda}$ satisfy the following equations:
        \begin{equation}\label{6A}
            \begin{cases}
               \partial_t \displaystyle\left(\frac{u_{\mu}^2}{2}\right) + \partial_x\left(u_{\mu}\partial_x^2u_{\mu} - \frac{(\partial_xu_{\mu})^2}{2}\right) = 0, \\ 
               \partial_t \displaystyle\left(\frac{u_{\lambda}^2}{2}\right) + \partial_x\left(u_{\lambda}\partial_x^2u_{\lambda} - \frac{(\partial_xu_{\lambda})^2}{2}\right) = 0.
            \end{cases}
        \end{equation}
        Let 
        \begin{equation*}
            \begin{cases}
                \displaystyle f^{11} = \frac{u_{\mu}^2}{2}, \\ 
                \displaystyle f^{12} = u_{\mu}\partial_x^2u_{\mu} - \frac{(\partial_xu_{\mu})^2}{2}, \\ 
                \displaystyle f^{21} = \frac{u_{\lambda}^2}{2}, \\ 
                -\displaystyle f^{22} = u_{\lambda}\partial_x^2u_{\lambda} - \frac{(\partial_xu_{\lambda})^2}{2}.
            \end{cases}
        \end{equation*}
        On the one hand, direct computation shows that 
        \begin{align*}
            &f^{11}f^{22} + f^{12}f^{21} \\ 
            = &-\frac{u_{\mu}^2}{2}\left(u_{\lambda}\partial_x^2u_{\lambda} - \frac{(\partial_xu_{\lambda})^2}{2}\right) + \frac{u_{\lambda}^2}{2}\left(u_{\mu}\partial_x^2u_{\mu} - \frac{(\partial_xu_{\mu})^2}{2}\right) \\ 
            = &\frac{1}{4}(u_{\mu}^2(\partial_xu_{\lambda})^2 - u_{\lambda}^2(\partial_xu_{\mu})^2) - \frac{1}{2}(u_{\mu}^2u_{\lambda}\partial_x^2u_{\lambda} - u_{\lambda}^2u_{\mu}\partial_x^2u_{\mu}) \\
            = &\frac{3}{4}(u_{\mu}^2(\partial_xu_{\lambda})^2 - u_{\lambda}^2(\partial_xu_{\mu})^2) - \frac{1}{2}(u_{\mu}^2\partial_x(u_{\lambda}\partial_xu_{\lambda}) - u_{\lambda}^2\partial_x(u_{\mu}\partial_xu_{\mu})).
        \end{align*}
        Integrating by parts and we obtain  
        \begin{equation}\label{6}
        \begin{aligned}
            &\int_0^T\int_{\mathbb{R}}f^{11}f^{22} + f^{12}f^{21} \\ 
            = &\frac{3}{4}\left(\Vert u_{\mu}\partial_xu_{\lambda}\Vert_{L_{t, x}^2}^2 - \Vert u_{\lambda}\partial_xu_{\mu}\Vert_{L_{t, x}^2}^2\right) + \frac{1}{2}\int_0^T\int_{\mathbb{R}}u_{\lambda}\partial_xu_{\lambda}\partial_x(u_{\mu}^2) - u_{\mu}\partial_xu_{\mu}\partial_x(u_{\lambda}^2) \\ 
            = &\frac{3}{4}\left(\Vert u_{\mu}\partial_xu_{\lambda}\Vert_{L_{t, x}^2}^2 - \Vert u_{\lambda}\partial_xu_{\mu}\Vert_{L_{t, x}^2}^2\right).
        \end{aligned}
    \end{equation}
        On the other hand, we have 
        \begin{equation}\label{7}
            \Vert f^{11}(0)\Vert_{L^1_x} = \frac{1}{2}\Vert u_{\mu}(0)\Vert_{L^2_x}^2, \quad \Vert f^{21}(0)\Vert_{L_x^1} = \frac{1}{2}\Vert u_{\lambda}(0)\Vert_{L_x^2}^2,
        \end{equation}
        and the conservation laws show that 
        \begin{equation}\label{8}
            \Vert f^{11}\Vert_{L^{\infty}_tL^1_x} \lesssim \Vert u_{\mu}(0)\Vert_{L^2_x}^2, \quad \Vert f^{21}\Vert_{L_t^{\infty}L^1_x} \lesssim \Vert u_{\lambda}(0)\Vert_{L^2_x}^2.
        \end{equation}
        Combining \eqref{6}-\eqref{8} with the div-curl lemma \eqref{d-c} and we complete the proof of the proposition.
    \end{proof}
\end{proposition}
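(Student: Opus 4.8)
The plan is to apply the div-curl lemma \eqref{d-c} to the pair of pointwise conservation laws obtained by testing the frequency-localized Airy equations against $u_{\mu}$ and $u_{\lambda}$ themselves.

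First I would note that, since the Littlewood--Paley projections $P_{\mu}$ and $P_{\lambda}$ are Fourier multipliers, they commute with $\partial_t$ and $\partial_x$, so both $u_{\mu}$ and $u_{\lambda}$ solve the free Airy equation $\partial_t v + \partial_x^3 v = 0$. Multiplying the equation for $u_{\mu}$ by $u_{\mu}$ and using $u_{\mu}\partial_x^3 u_{\mu} = \partial_x\bigl(u_{\mu}\partial_x^2 u_{\mu} - \frac12(\partial_x u_{\mu})^2\bigr)$ gives the local conservation law $\partial_t\bigl(\frac12 u_{\mu}^2\bigr) + \partial_x\bigl(u_{\mu}\partial_x^2 u_{\mu} - \frac12(\partial_x u_{\mu})^2\bigr) = 0$, and likewise for $u_{\lambda}$. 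I would then put this into the form required by the lemma by taking $f^{11} = \frac12 u_{\mu}^2$, $f^{12} = u_{\mu}\partial_x^2 u_{\mu} - \frac12(\partial_x u_{\mu})^2$, $f^{21} = \frac12 u_{\lambda}^2$ and $-f^{22} = u_{\lambda}\partial_x^2 u_{\lambda} - \frac12(\partial_x u_{\lambda})^2$, so that $G^1 = G^2 = 0$ (the decay $f^{ij}\to 0$ as $x\to+\infty$ holds for sufficiently nice, e.g.\ Schwartz, data, with the general case following by density).

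The heart of the argument is the identity for $\int_0^T\!\!\int_{\mathbb{R}} (f^{11}f^{22} + f^{12}f^{21})$. Expanding, one gets $\frac14\bigl(u_{\mu}^2(\partial_x u_{\lambda})^2 - u_{\lambda}^2(\partial_x u_{\mu})^2\bigr) - \frac12\bigl(u_{\mu}^2 u_{\lambda}\partial_x^2 u_{\lambda} - u_{\lambda}^2 u_{\mu}\partial_x^2 u_{\mu}\bigr)$; rewriting $u_{\mu}^2 u_{\lambda}\partial_x^2 u_{\lambda} = u_{\mu}^2\partial_x(u_{\lambda}\partial_x u_{\lambda}) - u_{\mu}^2(\partial_x u_{\lambda})^2$ (and symmetrically) and integrating by parts in $x$, the second-derivative terms collapse: the resulting remainder $\int_0^T\!\!\int_{\mathbb{R}}\bigl(u_{\lambda}\partial_x u_{\lambda}\,\partial_x(u_{\mu}^2) - u_{\mu}\partial_x u_{\mu}\,\partial_x(u_{\lambda}^2)\bigr)$ is identically zero, so $\int_0^T\!\!\int_{\mathbb{R}}(f^{11}f^{22} + f^{12}f^{21}) = \frac34\bigl(\Vert u_{\mu}\partial_x u_{\lambda}\Vert_{L^2_{t,x}}^2 - \Vert u_{\lambda}\partial_x u_{\mu}\Vert_{L^2_{t,x}}^2\bigr)$. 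For the right-hand side of \eqref{d-c} I would use $\Vert f^{11}(0)\Vert_{L^1_x} = \frac12\Vert u_{\mu}(0)\Vert_{L^2_x}^2$ together with the fact that $U(t)$ is unitary on $L^2$ and commutes with $P_{\mu}$, which gives $\Vert f^{11}\Vert_{L^{\infty}_t L^1_x} = \frac12\Vert u_{\mu}(0)\Vert_{L^2_x}^2$, and similarly for $f^{21}$; since $G^1 = G^2 = 0$, the right-hand side is $\lesssim \Vert u_{\mu}(0)\Vert_{L^2_x}^2\Vert u_{\lambda}(0)\Vert_{L^2_x}^2$, and absorbing the factor $\frac34$ into $\lesssim$ finishes the proof.

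I expect the only real difficulty to be the algebra in the middle step: correctly computing the fluxes of the two pointwise conservation laws and then choosing the integration by parts so that $f^{11}f^{22} + f^{12}f^{21}$ telescopes exactly into the antisymmetric difference $\Vert u_{\mu}\partial_x u_{\lambda}\Vert_{L^2_{t,x}}^2 - \Vert u_{\lambda}\partial_x u_{\mu}\Vert_{L^2_{t,x}}^2$ with the leftover a null (total $x$-derivative) term. Everything else --- the commutation, the $L^2$-conservation bounds, and the application of the div-curl lemma --- is routine.
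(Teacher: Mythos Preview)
Your proposal is correct and follows essentially the same approach as the paper: the same choice of $f^{ij}$ from the local $L^2$ conservation laws for $u_{\mu}$ and $u_{\lambda}$, the same integration-by-parts identity yielding $\frac{3}{4}\bigl(\Vert u_{\mu}\partial_x u_{\lambda}\Vert_{L^2_{t,x}}^2 - \Vert u_{\lambda}\partial_x u_{\mu}\Vert_{L^2_{t,x}}^2\bigr)$, and the same application of the div-curl lemma with $G^1=G^2=0$ and $L^2$-conservation for the right-hand side.
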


To prove the bilinear estimate we need the following lemma.
For a function $u$ on $\mathbb{R}$ and $y \in \mathbb{R}$, we define the \emph{translation} of $u$ by 
\begin{equation*}
    u^y(x) := u(x - y).
\end{equation*}

\begin{lemma}\label{lma}
    For dyadic number $\lambda \geq 1$, there holds
    \begin{gather}
        \label{a}\Vert u_{\mu}u_{\lambda}\Vert_{L_{t, x}^2} \lesssim \lambda^{-1}\sup_y\Vert u_{\mu}D_xu_{\lambda}^y\Vert_{L_{t, x}^2}, \\ 
        \label{b}\Vert u_{\mu}D_xu_{\lambda}\Vert_{L_{t, x}^2} \lesssim \lambda\sup_y\Vert u_{\mu}u_{\lambda}^y\Vert_{L_{t, x}^2}.
    \end{gather}
    \begin{proof}
        It suffices to prove \eqref{a}, and \eqref{b} can be proved in a similar way.
        Let $\chi$ be the compactly supported smooth function satisfying ${\rm supp}\ \varphi \subseteq {\rm supp}\ \chi$, where $\varphi$ is defined in Section \ref{S3.1}.
        It follows that 
        \begin{equation*}
            \widehat{u_{\lambda}} = \frac{1}{|\xi|}\widehat{D_xu_{\lambda}} = \lambda^{-1}\left(\chi\left(\frac{|\xi|}{\lambda}\right)\frac{\lambda}{|\xi|}\right)\widehat{D_xu_{\lambda}} =: \lambda^{-1}\widehat{h}\widehat{D_xu_{\lambda}}.
        \end{equation*}
        Therefore, $u_{\lambda} \sim \lambda^{-1}h \ast D_xu_{\lambda}$.
        Note that $\Vert h\Vert_{L^1} = C < +\infty$, where the constant $C$ is independent of $\lambda$, then it follows from Minkowski's inequality that 
        \begin{align*}
            \Vert u_{\mu}u_{\lambda}\Vert_{L^2_{x, t}} \sim \lambda^{-1}\Vert u_{\mu}(h \ast D_xu_{\lambda})\Vert_{L^2_{x, t}} &\leq \lambda^{-1} \int_{\mathbb{R}}|h(y)|\Vert u_{\mu}D_xu_{\lambda}^y\Vert_{L_{t, x}^2} \,{\rm d}y \\
            &\lesssim \lambda^{-1}\sup_y\Vert u_{\mu}D_xu_{\lambda}^y\Vert_{L_{t, x}^2}.
        \end{align*}
    \end{proof}
\end{lemma}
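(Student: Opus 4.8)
The plan is to realize the Fourier multiplier $|\xi|$ (and its reciprocal), restricted to the annulus $\{|\xi|\sim\lambda\}$ on which $\widehat{u_\lambda}$ is supported, as convolution against a kernel whose $L^1_x$-norm is independent of $\lambda$, and then to pull that convolution outside the spacetime $L^2$-norm by Minkowski's integral inequality. Both \eqref{a} and \eqref{b} follow from the same scheme, differing only in which power of $|\xi|$ is inverted, so I would prove them in parallel.

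For \eqref{a}, fix a smooth cutoff $\chi$ supported in an annulus bounded away from the origin with $\chi\equiv 1$ on $\mathrm{supp}\,\varphi$. Because $\widehat{D_xu_\lambda}(\xi)=|\xi|\,\widehat{u_\lambda}(\xi)$ and $\chi(|\xi|/\lambda)=1$ on $\mathrm{supp}\,\widehat{u_\lambda}$, I obtain the exact identity $\widehat{u_\lambda}=\lambda^{-1}\widehat h\,\widehat{D_xu_\lambda}$ with symbol $\widehat h(\xi)=\chi(|\xi|/\lambda)\,\lambda/|\xi|$, that is, $u_\lambda=\lambda^{-1}\,h\ast D_xu_\lambda$ in $x$. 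The crucial point is that $\|h\|_{L^1_x}$ is $\lambda$-independent: substituting $\xi=\lambda\eta$ shows $\widehat h(\lambda\eta)=\chi(|\eta|)/|\eta|=:\widehat{h_0}(\eta)$, a fixed smooth compactly supported function (smooth precisely because $\chi$ vanishes near the origin), whence $h_0$ is Schwartz and the dilation relation $h(x)=\lambda\,h_0(\lambda x)$ gives $\|h\|_{L^1}=\|h_0\|_{L^1}$. Writing the convolution as an average over translations, $h\ast D_xu_\lambda(x)=\int h(y)\,D_xu_\lambda^y(x)\,\mathrm{d}y$ (using that $D_x$ commutes with translation), and applying Minkowski's inequality yields
\[
\|u_\mu u_\lambda\|_{L^2_{t,x}}=\lambda^{-1}\Bigl\|\int h(y)\,u_\mu\,D_xu_\lambda^y\,\mathrm{d}y\Bigr\|_{L^2_{t,x}}\le \lambda^{-1}\int|h(y)|\,\|u_\mu D_xu_\lambda^y\|_{L^2_{t,x}}\,\mathrm{d}y\le \lambda^{-1}\|h\|_{L^1}\,\sup_y\|u_\mu D_xu_\lambda^y\|_{L^2_{t,x}},
\]
which is \eqref{a}.

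For \eqref{b} I would run the identical argument with the reciprocal symbol. Set $\widehat g(\xi)=\chi(|\xi|/\lambda)\,|\xi|/\lambda$, so that $\widehat{D_xu_\lambda}=\lambda\,\widehat g\,\widehat{u_\lambda}$ on $\mathrm{supp}\,\widehat{u_\lambda}$, i.e.\ $D_xu_\lambda=\lambda\,g\ast u_\lambda$. The same scaling $\xi=\lambda\eta$ now produces the fixed symbol $\widehat{g_0}(\eta)=\chi(|\eta|)\,|\eta|$, again smooth and compactly supported, so $g_0$ is Schwartz and $\|g\|_{L^1}=\|g_0\|_{L^1}$ is $\lambda$-independent. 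Minkowski's inequality then gives $\|u_\mu D_xu_\lambda\|_{L^2_{t,x}}\le\lambda\|g\|_{L^1}\sup_y\|u_\mu u_\lambda^y\|_{L^2_{t,x}}$, which is \eqref{b}.

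The main obstacle is exactly the $\lambda$-uniform $L^1$ bound on the kernels; it is what makes the estimate dyadically scalable. This hinges on choosing $\chi$ supported away from $\xi=0$, so that the symbols $\chi(|\eta|)\,|\eta|^{\pm1}$ are genuinely smooth (the factor $|\eta|$ being non-smooth only at the origin), and hence their inverse Fourier transforms are Schwartz and in particular integrable. Granting that, the frequency-localized identities are exact and the passage through Minkowski's integral inequality is routine.
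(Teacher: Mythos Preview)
Your proof is correct and follows essentially the same approach as the paper: realize the frequency-localized multiplier $|\xi|^{\pm 1}$ on the annulus $\{|\xi|\sim\lambda\}$ as convolution with a kernel of $\lambda$-independent $L^1$-norm, then apply Minkowski's integral inequality. You are in fact slightly more careful than the paper in two respects---you require $\chi\equiv 1$ on $\mathrm{supp}\,\varphi$ (the paper only writes $\mathrm{supp}\,\varphi\subseteq\mathrm{supp}\,\chi$, which by itself is not quite enough), and you spell out the scaling argument $h(x)=\lambda h_0(\lambda x)$ that makes the $L^1$ bound uniform in $\lambda$---but the underlying idea is identical.
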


It is clear that \eqref{a} and \eqref{b} are also valid when $D_x$ is replaced with $\partial_x$.

\begin{corollary}[Bilinear estimate I]\label{be-KdV}
    Let $u$ satisfy the free Airy equation $\partial_tu + \partial_x^3u = 0$.
    Then for any dyadic numbers $\lambda$ and $\mu$ satisfying $\mu \ll \lambda$, it holds that
    \begin{equation}\label{BE-KdV}
        \Vert u_{\mu}u_{\lambda}\Vert_{L_{t, x}^2} \lesssim \lambda^{-1}\Vert u_{\mu}(0)\Vert_{L_x^2}\Vert u_{\lambda}(0)\Vert_{L_x^2}.
    \end{equation}
    \begin{proof}
        Note that for all $a, b \in \mathbb{R}$, $u_{\mu}^a, u_{\lambda}^b$ also satisfy \eqref{6A}. Therefore, we can replace $u_{\mu}$ and $u_{\lambda}$ in the proof of Proposition \ref{prop} with $u_{\mu}^a$ and $u_{\lambda}^b$, respectively, while keeping the rest unchanged.
        At this point, \eqref{6A} becomes
        \begin{equation}\label{13}
            \Vert u_{\mu}^a\partial_xu_{\lambda}^b\Vert_{L_{t, x}^2}^2 - \Vert u_{\lambda}^b\partial_xu_{\mu}^a\Vert_{L_{t, x}^2}^2 \lesssim \Vert u_{\mu}^a(0)\Vert_{L_x^2}^2\Vert u_{\lambda}^b(0)\Vert_{L_x^2}^2 = \Vert u_{\mu}(0)\Vert_{L_x^2}^2\Vert u_{\lambda}(0)\Vert_{L_x^2}^2.
        \end{equation}
        Applying Lemma \ref{lma} on $u_{\mu}^a, u_{\lambda}^b$ and we obtain 
        \begin{gather}\label{c}
            \Vert u_{\mu}^au_{\lambda}^b\Vert_{L_{t, x}^2}^2 \lesssim \lambda^{-2}\sup_y\Vert u_{\mu}^a\partial_xu_{\lambda}^{b + y}\Vert_{L_{t, x}^2}^2 = \lambda^{-2}\sup_y\Vert u_{\mu}^a\partial_xu_{\lambda}^y\Vert_{L_{t, x}^2}^2.
        \end{gather}
        Taking supremum over $a$ and $b$ on both sides of \eqref{c} and we have 
        \begin{equation*}
            \sup_{a, b}\Vert u_{\mu}^au_{\lambda}^b\Vert_{L_{t, x}^2}^2 \lesssim \lambda^{-2}\sup_{a, b}\Vert u_{\mu}^a\partial_xu_{\lambda}^b\Vert_{L_{t, x}^2}^2,
        \end{equation*}
        which is equivalent to 
        \begin{equation}\label{15}
            \sup_a\Vert u_{\mu}^au_{\lambda}\Vert_{L_{t, x}^2}^2 \lesssim \lambda^{-2}\sup_a\Vert u_{\mu}^a\partial_xu_{\lambda}\Vert_{L_{t, x}^2}^2.
        \end{equation}
        Similarly, it holds that 
        \begin{equation}\label{16}
            \sup_a\Vert u_{\lambda}\partial_xu_{\mu}^a\Vert_{L_{t, x}^2}^2 \lesssim \mu^2\sup_a\Vert u_{\lambda}u_{\mu}^a\Vert_{L_{t, x}^2}^2.
        \end{equation}
        Now taking supremum over $a$ on both sides of \eqref{13} (with $b = 0$), it follows from \eqref{15} and \eqref{16} that 
        \begin{equation}\label{17}
        \begin{aligned}
            \Vert u_{\mu}(0)\Vert_{L_x^2}^2\Vert u_{\lambda}(0)\Vert_{L_x^2}^2 \gtrsim \sup_a\Vert u_{\mu}^a\partial_xu_{\lambda}\Vert_{L_{t, x}^2}^2 - \sup_a\Vert u_{\lambda}\partial_xu_{\mu}^a\Vert_{L_{t, x}^2}^2 \gtrsim (\lambda^2 - \mu^2)\sup_a\Vert u_{\mu}^au_{\lambda}\Vert_{L_{t, x}^2}^2.
        \end{aligned}
    \end{equation}
        Finally, since $\mu \ll \lambda$, we have that 
        \begin{equation}\label{10}
            (\lambda^2 - \mu^2)\sup_a\Vert u_{\mu}^au_{\lambda}\Vert_{L_{t, x}^2}^2 \geq (\lambda^2 - \mu^2)\Vert u_{\mu}u_{\lambda}\Vert_{L_{t, x}^2}^2 \gtrsim \lambda^2\Vert u_{\mu}u_{\lambda}\Vert_{L_{t, x}^2}^2,
        \end{equation}
        and the desired conclusion is obtained from \eqref{17} and \eqref{10}.
    \end{proof}
\end{corollary}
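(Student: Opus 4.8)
The plan is to combine Proposition \ref{prop}, which encodes the gain coming from the div-curl lemma, with Lemma \ref{lma}, which trades a spatial derivative on a dyadic factor for a power of its frequency at the cost of a supremum over translates. Heuristically, since $\mu \ll \lambda$ one expects $\Vert u_{\mu}\partial_xu_{\lambda}\Vert_{L_{t,x}^2} \sim \lambda\Vert u_{\mu}u_{\lambda}\Vert_{L_{t,x}^2}$ and $\Vert u_{\lambda}\partial_xu_{\mu}\Vert_{L_{t,x}^2} \sim \mu\Vert u_{\mu}u_{\lambda}\Vert_{L_{t,x}^2}$, so the left-hand side of Proposition \ref{prop} should be comparable to $(\lambda^2-\mu^2)\Vert u_{\mu}u_{\lambda}\Vert_{L_{t,x}^2}^2 \sim \lambda^2\Vert u_{\mu}u_{\lambda}\Vert_{L_{t,x}^2}^2$; dividing by $\lambda^2$ and taking square roots would then give \eqref{BE-KdV}. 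The real content is making this rigorous despite the translations.

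First I would observe that spatial translation commutes with both the free Airy flow and the Littlewood-Paley projections, so for every $a,b \in \mathbb{R}$ the translates $u_{\mu}^a$, $u_{\lambda}^b$ again satisfy the pair of conservation laws \eqref{6A}. Hence Proposition \ref{prop} applies verbatim to $u_{\mu}^a$, $u_{\lambda}^b$, and since the $L_x^2$ norm of the initial data is translation invariant the right-hand side is unchanged:
\[
    \Vert u_{\mu}^a\partial_xu_{\lambda}^b\Vert_{L_{t,x}^2}^2 - \Vert u_{\lambda}^b\partial_xu_{\mu}^a\Vert_{L_{t,x}^2}^2 \lesssim \Vert u_{\mu}(0)\Vert_{L_x^2}^2\Vert u_{\lambda}(0)\Vert_{L_x^2}^2 .
\]
Next I would feed the two halves of Lemma \ref{lma} (valid with $\partial_x$ in place of $D_x$) into this inequality. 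Applying \eqref{a} to $u_{\mu}^a$, $u_{\lambda}^b$, squaring, and taking a supremum over all translates shows $\sup_a\Vert u_{\mu}^au_{\lambda}\Vert_{L_{t,x}^2}^2 \lesssim \lambda^{-2}\sup_a\Vert u_{\mu}^a\partial_xu_{\lambda}\Vert_{L_{t,x}^2}^2$, the extra translation in \eqref{a} being absorbed into the supremum; similarly \eqref{b} gives $\sup_a\Vert u_{\lambda}\partial_xu_{\mu}^a\Vert_{L_{t,x}^2}^2 \lesssim \mu^2\sup_a\Vert u_{\mu}^au_{\lambda}\Vert_{L_{t,x}^2}^2$. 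I would then rearrange the translated Proposition \ref{prop} with $b=0$ as $\Vert u_{\mu}^a\partial_xu_{\lambda}\Vert_{L_{t,x}^2}^2 \lesssim \Vert u_{\mu}(0)\Vert_{L_x^2}^2\Vert u_{\lambda}(0)\Vert_{L_x^2}^2 + \Vert u_{\lambda}\partial_xu_{\mu}^a\Vert_{L_{t,x}^2}^2$, take $\sup_a$, and chain the two displayed bounds to get
\[
    \lambda^2\sup_a\Vert u_{\mu}^au_{\lambda}\Vert_{L_{t,x}^2}^2 \lesssim \Vert u_{\mu}(0)\Vert_{L_x^2}^2\Vert u_{\lambda}(0)\Vert_{L_x^2}^2 + \mu^2\sup_a\Vert u_{\mu}^au_{\lambda}\Vert_{L_{t,x}^2}^2 .
\]
Since $\mu \ll \lambda$, the last term is absorbed into the left, and bounding $\Vert u_{\mu}u_{\lambda}\Vert_{L_{t,x}^2}^2 \le \sup_a\Vert u_{\mu}^au_{\lambda}\Vert_{L_{t,x}^2}^2$ yields \eqref{BE-KdV} after taking square roots.

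The main obstacle is exactly the supremum over translates that Lemma \ref{lma} forces on us: one cannot naively replace $\Vert u_{\mu}\partial_xu_{\lambda}\Vert$ by $\lambda\Vert u_{\mu}u_{\lambda}\Vert$, because the symbol $|\xi|^{-1}$ restricted to $|\xi|\sim\lambda$ inverts to a non-compactly-supported kernel whose $L^1$-normalized convolution redistributes mass in $x$. What saves the argument is that translation commutes with the entire structure — the Airy flow, the frequency projections, and every $L^2$ norm appearing — so the differential identity of Proposition \ref{prop} is stable under translating either factor, and one can therefore close the estimate at the level of translation-suprema rather than the functions themselves. A secondary point requiring care is the subtraction of two suprema, which is why I pass through the rearranged inequality $X_a \lesssim (\text{data}) + Y_a$ before applying $\sup_a$, rather than subtracting $\sup_a Y_a$ directly.
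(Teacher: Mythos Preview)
Your proposal is correct and follows essentially the same route as the paper: invoke translation-invariance to apply Proposition \ref{prop} to the translated pair $(u_{\mu}^a,u_{\lambda}^b)$, use the two halves of Lemma \ref{lma} to convert between $\sup_a\Vert u_{\mu}^a\partial_xu_{\lambda}\Vert$ and $\lambda\sup_a\Vert u_{\mu}^a u_{\lambda}\Vert$ (resp.\ $\mu$), and then absorb the $\mu^2$ term since $\mu\ll\lambda$. The only cosmetic difference is that the paper writes the conclusion as $\sup_a X_a - \sup_a Y_a \lesssim \text{(data)}$ and then bounds this below by $(\lambda^2-\mu^2)\sup_a\Vert u_{\mu}^au_{\lambda}\Vert^2$, whereas you rearrange to $X_a \lesssim \text{(data)} + Y_a$ before taking the supremum; these are logically equivalent, and your explicit handling of this step is if anything a bit cleaner.
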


Lastly, we consider the case of mBO equation \eqref{mBO}.

\begin{lemma}\label{lma4.5}
    Let $u$ satisfy $\partial_tu + \mathcal{H}\partial_x^2u = 0$. Then for any dyadic numbers $\mu$ and $\lambda$, it holds that 
    \begin{equation}\label{E}
        \begin{cases}
            \displaystyle\partial_t\left(\frac{u_{\mu}^2}{2}\right) + \partial_x\left(u_{\mu}D_xu_{\mu} - \int_{-\infty}^x\partial_yu_{\mu}D_yu_{\mu} \,{\rm d}y\right) = 0, \\ 
            \displaystyle\partial_t(u_{\lambda}D_xu_{\lambda}) + \partial_x\left(\frac{(D_xu_{\lambda})^2}{2}-u_{\lambda}\partial_x^2u_{\lambda} + \frac{(\partial_xu_{\lambda})^2}{2}\right) = 0.
        \end{cases}
    \end{equation}
    \begin{proof}
        First of all, it is easy to verify that $u_{\mu}$ and $u_{\lambda}$ satisfy the following equations:
        \begin{numcases}{}
            \label{20}\partial_tu_{\mu} + \partial_xD_xu_{\mu} = 0, \\ 
            \label{21}\partial_tu_{\lambda} + \partial_xD_xu_{\lambda} = 0.
        \end{numcases}
        On the one hand, multiplying by $u_{\mu}$ on both sides of \eqref{20}, we have  
        \begin{equation*}
            \partial_t\left(\frac{u_{\mu}^2}{2}\right) + \partial_xp = 0,
        \end{equation*}
        where 
        \begin{equation*}
            p = \int_{-\infty}^xu_{\mu}\partial_yD_yu_{\mu} \,{\rm d}y.
        \end{equation*}
        Integrating by parts and we obtain 
        \begin{equation*}
            p = u_{\mu}D_xu_{\mu} - \int_{-\infty}^x\partial_yu_{\mu}D_yu_{\mu} \,{\rm d}y.
        \end{equation*}
        Hence $u_{\mu}$ satisfies 
        \begin{equation*}
            \partial_t\left(\frac{u_{\mu}^2}{2}\right) + \partial_x\left(u_{\mu}D_xu_{\mu} - \int_{-\infty}^x\partial_yu_{\mu}D_yu_{\mu} \,{\rm d}y\right) = 0.
        \end{equation*}
        On the other hand, it follows from \eqref{21} that 
        \begin{numcases}{}
            \label{22}D_xu_{\lambda}\partial_tu_{\lambda} + \partial_x\left(\frac{(D_xu_{\lambda})^2}{2}\right) = 0, \\ 
            \label{23}u_{\lambda}\partial_tD_xu_{\lambda} + \partial_x\widetilde{p} = 0,
        \end{numcases}
        where 
        \begin{equation*}
            \widetilde{p} = \int_{-\infty}^xu_{\lambda}\partial_yD_y^2u_{\lambda}.
        \end{equation*}
        Note that $D_y^2 = -\partial_y^2$. Then we have 
        \begin{equation}
            \label{24}\widetilde{p} = -\int_{-\infty}^xu_{\lambda}\partial_y^3u_{\lambda} \,{\rm d}y = -u_{\lambda}\partial_x^2u_{\lambda} + \int_{-\infty}^x\partial_yu_{\lambda}\partial_y^2u_{\lambda} \,{\rm d}y = -u_{\lambda}\partial_x^2u_{\lambda} + \frac{(\partial_xu_{\lambda})^2}{2}.
        \end{equation}
        Combining \eqref{22}-\eqref{24} and we obtain 
        \begin{equation*}
            \partial_t(u_{\lambda}D_xu_{\lambda}) + \partial_x\left(\frac{(D_xu_{\lambda})^2}{2}-u_{\lambda}\partial_x^2u_{\lambda} + \frac{(\partial_xu_{\lambda})^2}{2}\right) = 0.
        \end{equation*}
        This completes the proof.
    \end{proof}
\end{lemma}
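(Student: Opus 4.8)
The plan is to derive both divergence identities in \eqref{E} by elementary manipulations of the linear mBO equation, exploiting that the Littlewood--Paley projectors $P_\mu$ and $P_\lambda$ are Fourier multipliers and therefore commute with $\partial_t$, $\partial_x$ and $D_x$. I would begin by recording two structural facts. Since $D_x = \mathcal{H}\partial_x = \partial_x\mathcal{H}$ (Section \ref{S2}), the equation $\partial_t u + \mathcal{H}\partial_x^2 u = 0$ is the same as $\partial_t u + \partial_x D_x u = 0$, and projecting shows that $u_\mu$ and $u_\lambda$ solve $\partial_t u_\mu + \partial_x D_x u_\mu = 0$ and $\partial_t u_\lambda + \partial_x D_x u_\lambda = 0$ respectively. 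Moreover $D_x^2$ and $-\partial_x^2$ are both the Fourier multiplier with symbol $\xi^2$, so $D_x^2 = -\partial_x^2$. One further point deserves a word: $u_\mu$ and $u_\lambda$ are band-limited at frequencies bounded away from the origin, hence they decay rapidly in $x$, so every nonlocal primitive $\int_{-\infty}^x(\cdots)\,{\rm d}y$ that appears below converges absolutely and vanishes as $x\to -\infty$; this is the only step that is not purely formal.

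For the first identity I would multiply $\partial_t u_\mu + \partial_x D_x u_\mu = 0$ by $u_\mu$ to get $\partial_t(u_\mu^2/2) + u_\mu\partial_x D_x u_\mu = 0$, and then put the flux into divergence form: writing $u_\mu\partial_x D_x u_\mu = \partial_x(u_\mu D_x u_\mu) - \partial_x u_\mu\, D_x u_\mu$ and observing that $\partial_x u_\mu\, D_x u_\mu = \partial_x\bigl(\int_{-\infty}^x \partial_y u_\mu\, D_y u_\mu\,{\rm d}y\bigr)$ yields exactly $\partial_t(u_\mu^2/2) + \partial_x\bigl(u_\mu D_x u_\mu - \int_{-\infty}^x \partial_y u_\mu D_y u_\mu\,{\rm d}y\bigr) = 0$. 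Equivalently one may take the primitive $p := \int_{-\infty}^x u_\mu\partial_y D_y u_\mu\,{\rm d}y$ of the flux directly and integrate by parts once; the two routes coincide.

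For the second identity I would expand by the Leibniz rule, $\partial_t(u_\lambda D_x u_\lambda) = (\partial_t u_\lambda)\,D_x u_\lambda + u_\lambda\, D_x(\partial_t u_\lambda)$, and substitute $\partial_t u_\lambda = -\partial_x D_x u_\lambda$. The first term becomes $-(\partial_x D_x u_\lambda)(D_x u_\lambda) = -\partial_x\bigl((D_x u_\lambda)^2/2\bigr)$; the second becomes $-u_\lambda\partial_x D_x^2 u_\lambda = u_\lambda\partial_x^3 u_\lambda$ by $D_x^2 = -\partial_x^2$, and then $u_\lambda\partial_x^3 u_\lambda = \partial_x(u_\lambda\partial_x^2 u_\lambda) - \partial_x\bigl((\partial_x u_\lambda)^2/2\bigr)$. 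Summing gives $\partial_t(u_\lambda D_x u_\lambda) = -\partial_x\bigl((D_x u_\lambda)^2/2 - u_\lambda\partial_x^2 u_\lambda + (\partial_x u_\lambda)^2/2\bigr)$, which is the claimed identity (carrying the intermediate primitive $\widetilde p := \int_{-\infty}^x u_\lambda\partial_y D_y^2 u_\lambda\,{\rm d}y$ and integrating by parts once is an alternative route to the same conclusion). I do not expect any genuine obstacle in this lemma: the computation is pure bookkeeping, the substantive inputs being the commutation of the projectors with the flow, the symbol identities $\mathcal{H}\partial_x^2 = \partial_x D_x$ and $D_x^2 = -\partial_x^2$, and the decay that legitimises the spatial antiderivatives. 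The only real choices — using $u_\mu^2/2$ as the density for the low-frequency piece (exactly as in the Airy identities \eqref{6A}) but $u_\lambda D_x u_\lambda$ rather than $u_\lambda^2/2$ for the high-frequency piece — are dictated not by this lemma but by the forthcoming application of the div-curl lemma \eqref{d-c}, where this pairing makes the cross terms $f^{11}f^{22} + f^{12}f^{21}$ reproduce, up to lower-order contributions, the quantity controlling $\Vert u_\mu u_\lambda\Vert_{L_{t, x}^2}$; the extra factor $D_x \sim \lambda$ carried by the high-frequency density is exactly what will yield the $\lambda^{-1/2}$ gain for mBO, in contrast with the $\lambda^{-1}$ gain of the Airy case treated in Proposition \ref{prop} and Corollary \ref{be-KdV}.
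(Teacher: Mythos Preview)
Your proposal is correct and follows essentially the same approach as the paper: both derive the projected equations $\partial_t u_\mu + \partial_x D_x u_\mu = 0$, $\partial_t u_\lambda + \partial_x D_x u_\lambda = 0$, obtain the first identity by multiplying by $u_\mu$ and integrating by parts (your direct product-rule manipulation and the paper's primitive $p=\int_{-\infty}^x u_\mu\partial_y D_y u_\mu\,{\rm d}y$ are the same computation), and obtain the second identity via the Leibniz split $\partial_t(u_\lambda D_x u_\lambda)=D_xu_\lambda\,\partial_t u_\lambda + u_\lambda\,\partial_t D_x u_\lambda$ together with $D_x^2=-\partial_x^2$ (the paper writes this as the sum of its equations \eqref{22}--\eqref{23} and simplifies the primitive $\widetilde p$, which you mention as an equivalent route). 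Your extra remarks on decay and on the rationale for the choice of densities go slightly beyond what the paper records but do not change the argument.
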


\begin{lemma}\label{lma2}
    For $a \in \mathbb{R}$, define 
    \begin{equation*}
        q_a(x) = \int_{-\infty}^x\partial_yu_{\mu}^aD_yu_{\mu}^a \,{\rm d}y.
    \end{equation*}
    Then there holds 
    \begin{equation*}
        \sup_a\int_0^T\int_{\mathbb{R}}u_{\lambda}D_xu_{\lambda} \cdot q_a(x) \gtrsim -\mu\lambda\sup_a\Vert u_{\mu}^au_{\lambda}\Vert_{L_{t, x}^2}^2.
    \end{equation*}
    \begin{proof}
        First of all, we have 
        \begin{equation*}
            q_a(x) = \frac{1}{2}\int_{-\infty}^x(\partial_yu_{\mu}^aD_yu_{\mu}^a + D_yu_{\mu}^a\partial_yu_{\mu}^a) \,{\rm d}y.
        \end{equation*}
        It follows that (ignoring the constant 1/2)
        \begin{equation*}
            \begin{aligned}
                \widehat{q_a}(\xi) = \frac{\widehat{\partial_xq_a}(\xi)}{\ii\xi} &= \frac{1}{\ii\xi}(\partial_xu_{\mu}^aD_xu_{\mu}^a + D_xu_{\mu}^a\partial_xu_{\mu}^a)^{\wedge}(\xi) \\
                &= \int_{\mathbb{R}}\left(\frac{\eta|\xi - \eta| + |\eta|(\xi - \eta)}{\xi}\right)\widehat{u_{\mu}^a}(\eta)\widehat{u^a_{\mu}}(\xi - \eta) \,{\rm d}\eta \\ 
                &= \int_{\mathbb{R}}\left(\frac{{\rm sgn}(\eta)|\xi - \eta| + (\xi - \eta)}{\xi}\right)|\eta|\widehat{u_{\mu}^a}(\eta)\widehat{u_{\mu}^a}(\xi - \eta) \,{\rm d}\eta.
            \end{aligned}
        \end{equation*}
        Let $\chi$ be the compactly supported smooth function defined in the proof of Lemma \ref{lma}.
        Using similar strategy and $\widehat{q}$ can be reformulated as 
        \begin{equation*}
        \begin{aligned}
            \widehat{q_a}(\xi) &= \mu\int_{\mathbb{R}}\left(\frac{{\rm sgn}(\eta)|\xi - \eta| + (\xi - \eta)}{\xi}\right)\chi\left(\frac{|\xi - \eta|}{\mu}\right)\chi\left(\frac{|\eta|}{\mu}\right)\frac{|\eta|}{\mu}\widehat{u_{\mu}^a}(\eta)\widehat{u_{\mu}^a}(\xi - \eta) \,{\rm d}\eta \\ 
            &:= \mu\int_{\mathbb{R}}K(\xi - \eta, \eta)\widehat{u_{\mu}^a}(\eta)\widehat{u_{\mu}^a}(\xi - \eta) \,{\rm d}\eta.
        \end{aligned}
        \end{equation*}
        It follows that 
        \begin{equation*}
            \begin{aligned}
                q_a(x) &= \mu\int_{\mathbb{R}}\int_{\mathbb{R}}K(\xi - \eta, \eta)\widehat{u_{\mu}^a}(\eta)\widehat{u_{\mu}^a}(\xi - \eta)\e^{\ii x \cdot \xi} \,{\rm d}\eta{\rm d}\xi \\ 
                &= \mu\int_{\mathbb{R}}\int_{\mathbb{R}}K(\tau, \tau')\widehat{u_{\mu}^a}(\tau)\widehat{u_{\mu}^a}(\tau')\e^{\ii x \cdot (\tau + \tau')} \,{\rm d}\tau{\rm d}\tau' \\ 
                &\sim \mu\int_{\mathbb{R}}\int_{\mathbb{R}}\mathscr{F}_{\tau, \tau'}^{-1}K(w, w')u_{\mu}^a(x - w)u_{\mu}^a(x - w') \,{\rm d}w{\rm d}w'.
            \end{aligned}
        \end{equation*}
        Note that $\Vert \mathscr{F}_{\tau, \tau'}^{-1}K\Vert_{L^1} = C < +\infty$, where $C$ is a constant independent of $\mu$, then we can obtain 
        \begin{equation*}
            \sup_a\int_0^T\int_{\mathbb{R}}u_{\lambda}D_xu_{\lambda} \cdot q_a \gtrsim -\mu\lambda\sup_a\Vert u_{\mu}^au_{\lambda}\Vert_{L_{t, x}^2}^2
        \end{equation*}
        from Minkowski's inequality. This completes the proof.
    \end{proof}
\end{lemma}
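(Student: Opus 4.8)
The plan is to represent $q_a$ in physical space as an $L^1$-weighted superposition of products of two spatial translates of $u_\mu^a$, and then to pair this representation against $u_\lambda D_x u_\lambda$, estimating the resulting bilinear spacetime integrals by Cauchy--Schwarz together with the derivative-transfer bound \eqref{b} of Lemma \ref{lma}. First I would symmetrize, writing $q_a(x)=\frac12\int_{-\infty}^x(\partial_y u_\mu^a\,D_y u_\mu^a+D_y u_\mu^a\,\partial_y u_\mu^a)\,{\rm d}y$, so that $\partial_x q_a$ is a symmetric quadratic form in $u_\mu^a$. Dividing the Fourier transform of $\partial_x q_a$ by $\ii\xi$ expresses $\widehat{q_a}(\xi)$ as a convolution integral with symbol $\bigl({\rm sgn}(\eta)|\xi-\eta|+(\xi-\eta)\bigr)|\eta|/\xi$. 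Since $\widehat{u_\mu^a}$ is supported in $\{|\eta|\sim\mu\}$, I would harmlessly insert the cutoffs $\chi(|\eta|/\mu)\chi(|\xi-\eta|/\mu)$ (with $\chi$ as in the proof of Lemma \ref{lma}) and pull out one factor of $\mu$, arriving at $\widehat{q_a}(\xi)=\mu\int K(\xi-\eta,\eta)\widehat{u_\mu^a}(\eta)\widehat{u_\mu^a}(\xi-\eta)\,{\rm d}\eta$ for the kernel $K$ displayed in the statement; reversing the Fourier integrals then yields $q_a(x)\sim\mu\iint(\mathscr{F}^{-1}_{\tau,\tau'}K)(w,w')\,u_\mu^a(x-w)u_\mu^a(x-w')\,{\rm d}w\,{\rm d}w'$.

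The crux, and the step I expect to be the main obstacle, is to show $\|\mathscr{F}^{-1}_{\tau,\tau'}K\|_{L^1}=C<\infty$ with $C$ independent of $\mu$. I would do this by scaling: substituting $\eta=\mu\zeta$ and $\xi-\eta=\mu\zeta'$ turns $K$ into the fixed function $\tilde K(\zeta',\zeta)=\frac{{\rm sgn}(\zeta)|\zeta'|+\zeta'}{\zeta+\zeta'}\chi(|\zeta'|)\chi(|\zeta|)|\zeta|$, and a change of variables shows that the $L^1$ norm of a two-dimensional inverse Fourier transform is invariant under the dilation $(\tau,\tau')\mapsto(\tau/\mu,\tau'/\mu)$, so $\|\mathscr{F}^{-1}_{\tau,\tau'}K\|_{L^1}=\|\mathscr{F}^{-1}\tilde K\|_{L^1}$. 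It then remains to verify $\tilde K\in C_c^\infty$. The only danger is the factor $1/(\zeta+\zeta')$, but the cutoffs force $|\zeta|,|\zeta'|\sim1$ (bounded away from the axes), and the numerator ${\rm sgn}(\zeta)|\zeta'|+\zeta'$ vanishes identically whenever $\zeta$ and $\zeta'$ have opposite signs---precisely the regime in which $\zeta+\zeta'$ can vanish---so the apparent singularity is removable and $\tilde K$ is smooth with compact support, whence $\mathscr{F}^{-1}\tilde K\in\mathcal S\subset L^1$.

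With the representation of $q_a$ in hand, I would insert it into $\int_0^T\int_{\mathbb R}u_\lambda D_x u_\lambda\cdot q_a$ and apply Fubini (justified by the $L^1$ kernel), noting that $u_\mu^a(x-w)=u_\mu^{a+w}$ is again a free solution by translation invariance. This reduces matters to a uniform bound, over all shifts $w,w'$, on the inner integral $\int_0^T\int_{\mathbb R}u_\lambda D_x u_\lambda\,u_\mu^{a+w}u_\mu^{a+w'}$. By Cauchy--Schwarz in $(t,x)$ this is at most $\|u_\mu^{a+w}u_\lambda\|_{L^2_{t,x}}\,\|u_\mu^{a+w'}D_x u_\lambda\|_{L^2_{t,x}}$; the first factor is $\le\sup_a\|u_\mu^a u_\lambda\|_{L^2_{t,x}}$, while the estimate \eqref{b} of Lemma \ref{lma}, applied to the translate $u_\mu^{a+w'}$, gives $\|u_\mu^{a+w'}D_x u_\lambda\|_{L^2_{t,x}}\lesssim\lambda\sup_y\|u_\mu^{a+w'}u_\lambda^y\|_{L^2_{t,x}}=\lambda\sup_a\|u_\mu^a u_\lambda\|_{L^2_{t,x}}$. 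Hence each inner integral is $\lesssim\lambda\sup_a\|u_\mu^a u_\lambda\|_{L^2_{t,x}}^2$ uniformly in $w,w'$, and integrating against $\mu\,|\mathscr{F}^{-1}_{\tau,\tau'}K|$ produces $\bigl|\int_0^T\int_{\mathbb R}u_\lambda D_x u_\lambda\,q_a\bigr|\lesssim\mu\lambda\sup_a\|u_\mu^a u_\lambda\|_{L^2_{t,x}}^2$. An upper bound on the modulus is in particular a lower bound with the opposite sign, so $\int_0^T\int_{\mathbb R}u_\lambda D_x u_\lambda\,q_a\gtrsim-\mu\lambda\sup_a\|u_\mu^a u_\lambda\|_{L^2_{t,x}}^2$ for every $a$, and taking the supremum over $a$ on the left (the right being $a$-independent) yields exactly the claimed bound.
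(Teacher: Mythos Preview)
Your proposal is correct and follows essentially the same route as the paper's proof: symmetrize, compute the Fourier symbol, insert cutoffs and extract the factor $\mu$, pass back to physical space as an $L^1$-weighted superposition of translated products, and then estimate the pairing with $u_\lambda D_x u_\lambda$. You supply two details the paper leaves implicit---the scaling/removable-singularity argument for $\|\mathscr{F}^{-1}K\|_{L^1}=C$ independent of $\mu$, and the explicit Cauchy--Schwarz plus Lemma~\ref{lma}\eqref{b} bound on the inner integral---but these are exactly the justifications underlying the paper's terse ``Minkowski's inequality'' step.
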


\begin{theorem}[Bilinear estimate II]\label{be-BO}
    Let $u$ satisfy $\partial_tu + \mathcal{H}\partial_x^2u = 0$.
    Then for any dyadic numbers $\lambda$ and $\mu$ satisfying $\mu \ll \lambda$, it holds that
    \begin{equation}\label{BE-BO}
        \Vert u_{\mu}u_{\lambda}\Vert_{L_{t, x}^2} \lesssim \lambda^{-\frac{1}{2}}\Vert u_{\mu}(0)\Vert_{L_x^2}\Vert u_{\lambda}(0)\Vert_{L_x^2}.
    \end{equation}
    \begin{proof}
        Set 
        \begin{equation*}
            \begin{cases}
                \displaystyle f^{11} = u_{\lambda}D_xu_{\lambda}, \\ 
                \displaystyle f^{12} = \frac{(D_xu_{\lambda})^2}{2}-u_{\lambda}\partial_x^2u_{\lambda} + \frac{(\partial_xu_{\lambda})^2}{2}, \\ 
                \displaystyle f^{21} = \frac{u_{\mu}^2}{2}, \\ 
                \displaystyle -f^{22} = u_{\mu}D_xu_{\mu} - \int_{-\infty}^x\partial_yu_{\mu}D_yu_{\mu} \,{\rm d}y.
            \end{cases}
        \end{equation*}
        Apply div-curl lemma to equation \eqref{E}. First of all, direct calculations show that 
        \begin{equation*}
            \begin{aligned}
                &\int_0^T\int_{\mathbb{R}}f^{11}f^{22} \\
                = &-\int_0^T\int_{\mathbb{R}}u_{\lambda}D_xu_{\lambda} \cdot u_{\mu}D_xu_{\mu} + \int_0^T\int_{\mathbb{R}}u_{\lambda}D_xu_{\lambda} \cdot \left(\int_{-\infty}^x\partial_yu_{\mu}D_yu_{\mu} \,{\rm d}y\right), \\
                &\int_0^T\int_{\mathbb{R}}f^{12}f^{21} \\ 
                = &\int_0^T\int_{\mathbb{R}}\left(\frac{1}{4}u_{\mu}^2(D_xu_{\lambda})^2 - \frac{1}{2}u_{\mu}^2u_{\lambda}\partial_x^2u_{\lambda} + \frac{1}{4}u_{\mu}^2(\partial_xu_{\lambda})^2\right) \\ 
                = &\frac{1}{4}\Vert u_{\mu}D_xu_{\lambda}\Vert_{L_{t, x}^2}^2 + \frac{1}{4}\Vert u_{\mu}\partial_xu_{\lambda}\Vert_{L_{t, x}^2}^2 - \frac{1}{2}\int_0^T\int_{\mathbb{R}}u_{\mu}^2(\partial_x(u_{\lambda}\partial_xu_{\lambda}) - (\partial_xu_{\lambda})^2) \\ 
                = &\frac{1}{4}\Vert u_{\mu}D_xu_{\lambda}\Vert_{L_{t, x}^2}^2 + \frac{3}{4}\Vert u_{\mu}\partial_xu_{\lambda}\Vert_{L_{t, x}^2}^2 + \frac{1}{2}\int_0^T\int_{\mathbb{R}}u_{\mu}\partial_xu_{\mu}\cdot u_{\lambda}\partial_xu_{\lambda}.
            \end{aligned}
        \end{equation*}
        Now substitute $u_{\mu}^a$ for $u_{\mu}$ in the aformentioned proof and keeping the rest unchanged. We have  
        \begin{align*}
            \int_0^T\int_{\mathbb{R}}f^{11}f^{22} + f^{12}f^{21} &= \frac{1}{4}\Vert u_{\mu}^aD_xu_{\lambda}\Vert_{L_{t, x}^2}^2 + \frac{3}{4}\Vert u_{\mu}^a\partial_xu_{\lambda}\Vert_{L_{t, x}^2}^2 + \frac{1}{2}\int_0^T\int_{\mathbb{R}}u_{\mu}^a\partial_xu_{\mu}^a\cdot u_{\lambda}\partial_xu_{\lambda} \\ 
            &- \int_0^T\int_{\mathbb{R}}u_{\mu}^aD_xu_{\mu}^a \cdot u_{\lambda}D_xu_{\lambda} + \int_0^T\int_{\mathbb{R}}u_{\lambda}D_xu_{\lambda} \cdot \left(\int_{-\infty}^x\partial_yu_{\mu}^aD_yu_{\mu}^a \,{\rm d}y\right).
        \end{align*}
        Similar to the proof Theorem \ref{be-KdV}, it follows from Lemma \ref{lma}, \ref{lma2} and their proofs that 
        \begin{gather}
            \label{25}\sup_a\Vert u_{\mu}^aD_xu_{\lambda}\Vert_{L_{t, x}^2}^2 \gtrsim \lambda^2\sup_a\Vert u_{\mu}^au_{\lambda}\Vert_{L_{t, x}^2}^2, \\
            \label{26}\sup_a\Vert u_{\mu}^a\partial_xu_{\lambda}\Vert_{L_{t, x}^2}^2 \gtrsim \lambda^2\sup_a\Vert u_{\mu}^au_{\lambda}\Vert_{L_{t, x}^2}^2, \\ 
            \label{27}\sup_a\int_0^T\int_{\mathbb{R}}u_{\mu}^a\partial_xu_{\mu}^a\cdot u_{\lambda}\partial_xu_{\lambda} \gtrsim -\mu\lambda\sup_a\Vert u_{\mu}^au_{\lambda}\Vert_{L_{t, x}^2}^2, \\
            \label{28}\sup_a\Vert u_{\mu}^au_{\lambda}\Vert_{L_{t, x}^2}^2 \gtrsim (\mu\lambda)^{-1}\sup_a\int_0^T\int_{\mathbb{R}}u_{\mu}^aD_xu_{\mu}^a\cdot u_{\lambda}D_xu_{\lambda}, \\ 
            \label{29}\sup_a\int_0^Tu_{\lambda}D_xu_{\lambda} \cdot \left(\int_{-\infty}^x\partial_yu_{\mu}^aD_yu_{\mu}^a {\rm d}y\right) \gtrsim -\mu\lambda\sup_a\Vert u_{\mu}^au_{\lambda}\Vert_{L_{t, x}^2}^2.
        \end{gather}
        Combining \eqref{25}-\eqref{29} and we have 
        \begin{equation}\label{30}
            \begin{aligned}
                \sup_a\int_0^T\int_{\mathbb{R}}f^{11}f^{22} - f^{21}f^{22} &\gtrsim \lambda^2\sup_a\Vert u_{\mu}^au_{\lambda}\Vert_{L_{t, x}^2}^2 - \mu\lambda\sup_a\Vert u_{\mu}^au_{\lambda}\Vert_{L_{t, x}^2}^2 \\
                &\gtrsim \lambda^2\sup_a\Vert u_{\mu}^au_{\lambda}\Vert_{L_{t, x}^2}^2 \\ 
                &\gtrsim \lambda^2\Vert u_{\mu}u_{\lambda}\Vert_{L_{t, x}^2}^2.
            \end{aligned}
        \end{equation}
        Lastly, since
        \begin{gather}
            \label{31}\Vert f^{11}(t)\Vert_{L^1_x} = \Vert u_{\lambda}(t)D_xu_{\lambda}(t)\Vert_{L^1_x} \leq \lambda\Vert u_{\lambda}(t)\Vert_{L_x^2}^2 = \lambda\Vert u_{\lambda}(0)\Vert_{L^2_x}^2, \quad \forall t, \\
            \label{32}\frac{{\rm d}}{{\rm d}t}\Vert f^{21}(t)\Vert_{L_x^1} = 0, \quad \Vert f^{21}(0)\Vert_{L^1_x} = \Vert (u_{\mu}^a)^2(0)\Vert_{L^1_x} = \Vert u_{\mu}(0)\Vert_{L^2_x}^2,
        \end{gather}
        then \eqref{BE-BO} follows from \eqref{30}-\eqref{32}.
    \end{proof}
\end{theorem}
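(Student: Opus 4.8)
The plan is to adapt the div-curl strategy that produced the mKdV bilinear estimate in Corollary \ref{be-KdV}, replacing the Airy flux identities \eqref{6A} with their Benjamin--Ono counterparts from Lemma \ref{lma4.5}. The essential new feature is that the two densities fed into the div-curl lemma \eqref{d-c} must be chosen \emph{asymmetrically}. For the low frequency I keep the genuine mass density $f^{21} = u_\mu^2/2$, whose flux (the first line of \eqref{E}) now carries the nonlocal antiderivative $\int_{-\infty}^x \partial_y u_\mu D_y u_\mu \,{\rm d}y$; for the high frequency I take instead $f^{11} = u_\lambda D_x u_\lambda$ together with the flux $f^{12}$ given by the second line of \eqref{E}. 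I then identify $f^{12}$ with the high-frequency flux and $-f^{22}$ with the low-frequency flux, so that the pairs $(f^{11}, f^{12})$ and $(f^{21}, f^{22})$ satisfy the hypotheses of the lemma with $G^1 = G^2 = 0$.

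The power counting is what fixes the exponent $-1/2$. On the right-hand side of \eqref{d-c}, Bernstein's inequality and Cauchy--Schwarz give $\Vert f^{11}\Vert_{L^1_x} = \Vert u_\lambda D_x u_\lambda\Vert_{L^1_x} \le \Vert u_\lambda\Vert_{L^2_x}\Vert D_x u_\lambda\Vert_{L^2_x} \sim \lambda\Vert u_\lambda\Vert_{L^2_x}^2$, and since the $L^2$ norm of $u_\lambda$ is conserved this bound is uniform in $t$; the low-frequency factor $f^{21} = u_\mu^2/2$ contributes the conserved quantity $\tfrac12\Vert u_\mu(0)\Vert_{L^2_x}^2$. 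As both source terms vanish, the right-hand side of \eqref{d-c} is $\lesssim \lambda\Vert u_\lambda(0)\Vert_{L^2_x}^2\Vert u_\mu(0)\Vert_{L^2_x}^2$, carrying exactly one power of $\lambda$, whereas the left-hand side will be shown to carry two.

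For the left-hand side I expand $\int_0^T\int_{\mathbb{R}} f^{11}f^{22} + f^{12}f^{21}$ and integrate by parts. The product $f^{12}f^{21}$ produces the two positive quadratic terms $\tfrac14\Vert u_\mu D_x u_\lambda\Vert_{L^2_{t,x}}^2 + \tfrac34\Vert u_\mu \partial_x u_\lambda\Vert_{L^2_{t,x}}^2$ plus a cross term $\int u_\mu \partial_x u_\mu \cdot u_\lambda \partial_x u_\lambda$, while $f^{11}f^{22}$ contributes $-\int u_\lambda D_x u_\lambda \cdot u_\mu D_x u_\mu$ together with the nonlocal term built from the antiderivative $q_a$. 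As in Corollary \ref{be-KdV}, I introduce the translates $u_\mu^a$ (legitimate, since every translate again solves the linear equation), take the supremum over $a$, and invoke Lemma \ref{lma} to convert derivatives on the high-frequency factor into powers of $\lambda$: both positive terms are then bounded below by $\gtrsim \lambda^2 \sup_a\Vert u_\mu^a u_\lambda\Vert_{L^2_{t,x}}^2$, supplying the two powers of $\lambda$. Every remaining term is bounded below by $-\mu\lambda\sup_a\Vert u_\mu^a u_\lambda\Vert_{L^2_{t,x}}^2$, hence negligible because $\mu \ll \lambda$. Combining, the left-hand side is $\gtrsim \lambda^2\Vert u_\mu u_\lambda\Vert_{L^2_{t,x}}^2$, and balancing against the right-hand side gives $\lambda^2\Vert u_\mu u_\lambda\Vert^2 \lesssim \lambda\Vert u_\lambda(0)\Vert^2\Vert u_\mu(0)\Vert^2$, which is the claim after taking square roots.

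I expect the main obstacle to be the nonlocal term $\int_0^T\int_{\mathbb{R}} u_\lambda D_x u_\lambda \cdot q_a(x)$ with $q_a(x) = \int_{-\infty}^x \partial_y u_\mu^a D_y u_\mu^a \,{\rm d}y$, which has no counterpart in the mKdV computation and is the price of the half-derivative (Hilbert-transform) structure of \eqref{mBO}. The antiderivative cannot simply be discarded, so I handle it by computing the Fourier symbol of $\partial_x q_a$, dividing by $\ii\xi$, and extracting a single factor $\mu$ after inserting frequency cutoffs supported on the annulus $|\eta|\sim\mu$; the residual symbol becomes a fixed kernel whose inverse Fourier transform lies in $L^1$ uniformly in $\mu$, so that $q_a$ is a $\mu$-scaled convolution of $u_\mu^a$ with itself and Minkowski's inequality yields the bound $\gtrsim -\mu\lambda\sup_a\Vert u_\mu^a u_\lambda\Vert^2$ (this is precisely Lemma \ref{lma2}). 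A secondary technical point is that the raw norm $\Vert u_\mu u_\lambda\Vert$ is not directly comparable to $\Vert u_\mu D_x u_\lambda\Vert$; the translation-averaging device of Lemma \ref{lma}, in which one passes to $\sup_a$ before applying the frequency-localized comparison, is what makes this conversion clean and lets the $\mu \ll \lambda$ hypothesis do its work.
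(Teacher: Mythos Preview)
Your proposal is correct and follows essentially the same argument as the paper: the asymmetric choice $f^{11}=u_\lambda D_x u_\lambda$, $f^{21}=u_\mu^2/2$ with the fluxes from Lemma~\ref{lma4.5}, the integration-by-parts expansion yielding the positive terms $\tfrac14\Vert u_\mu D_x u_\lambda\Vert^2+\tfrac34\Vert u_\mu\partial_x u_\lambda\Vert^2$, the translation-supremum device from Lemma~\ref{lma}, and the treatment of the nonlocal antiderivative via Lemma~\ref{lma2} all match the paper's proof step for step. Your identification of the nonlocal term as the main new obstacle and your power-counting explanation of the exponent $-1/2$ are exactly right.
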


\section{Proof of the Local Well-Posedness}\label{S5}

In this section, we first prove some estimates that we will use in the proof of Proposition \ref{prop4.1}, then we complete the proof of Proposition \ref{prop4.1}.
In the remaining part of this paper, we denote by $u := u^{(0)}$, where $u^{(0)}$ was defined in \eqref{31A}, and $r := \Vert \phi\Vert_{H^{\frac{1}{4}}_x}$ for simplicity.

\begin{lemma}
    Given dyadic number $\lambda$. Then there holds
    \begin{equation}\label{A}
        \Vert D_x^{\frac{3}{8}}u_{\lambda}\Vert_{L_{t, x}^4} \lesssim T^{\frac{1}{8}}\Vert D_x^{\frac{1}{4}}\phi_{\lambda}\Vert_{L^2_x}
    \end{equation}
    and equivalently, 
    \begin{equation}\label{ST1}
        \lambda^{\frac{3}{2}}\Vert u_{\lambda}\Vert_{L_{t, x}^4}^4 \lesssim T^{\frac{1}{2}}\lambda\Vert \phi_{\lambda}\Vert_{L^2_x}^4.
    \end{equation}
    Moreover, we have 
    \begin{gather} 
        \label{B}\sum_{\lambda \geq 1}\lambda^{\frac{3}{2}}\Vert u_{\lambda}\Vert_{L_{t, x}^4}^4 \lesssim T^{\frac{1}{2}}r^4, \\ 
        \label{C}\sum_{\lambda \geq 1}\lambda\Vert u_{\lambda}\Vert_{L_{t, x}^4}^{\frac{8}{3}} \lesssim T^{\frac{1}{3}}r^{\frac{8}{3}}.
    \end{gather}
    \begin{proof}
        We first notice that \eqref{ST1} can be derived by \eqref{A} and Bernstein's inequality.
        For \eqref{A}, by H\"older's inequality and Strichartz estimate we have 
        \begin{equation*}
            \Vert D_x^{\frac{3}{8}}u_{\lambda}\Vert_{L_{t, x}^4} \leq T^{\frac{1}{8}}\Vert D_x^{\frac{3}{8}}u_{\lambda}\Vert_{L_t^8L_x^4} \lesssim T^{\frac{1}{8}}\Vert D_x^{\frac{1}{4}}\phi_{\lambda}\Vert_{L^2_x},
        \end{equation*}
        which proves \eqref{A}, and it follows from \eqref{ST1} and the inclusion $\ell^2 \hookrightarrow \ell^4$ that
        \begin{align*}
            \sum_{\lambda \geq 1}\lambda^{\frac{3}{2}}\Vert u_{\lambda}\Vert_{L_{t, x}^4}^4 \lesssim T^{\frac{1}{2}}\sum_{\lambda \geq 1}\lambda\Vert \phi_{\lambda}\Vert_{L_x^2}^4 &= T^{\frac{1}{2}}\left(\sum_{\lambda \geq 1}\left(\lambda^{\frac{1}{4}}\Vert \phi_{\lambda}\Vert_{L_x^2}\right)^4\right)^{\frac{1}{4} \cdot 4} \\ 
            &\lesssim T^{\frac{1}{2}}\left(\sum_{\lambda \geq 1}\left(\lambda^{\frac{1}{4}}\Vert \phi_{\lambda}\Vert_{L_x^2}\right)^2\right)^{\frac{1}{2} \cdot 4} \\ 
            &= T^{\frac{1}{2}}r^4.
        \end{align*}
        Finally, the estimate \eqref{C} follows similarly due to the inclusion $\ell^2 \hookrightarrow \ell^{\frac{8}{3}}$.
    \end{proof}
\end{lemma}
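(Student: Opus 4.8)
The plan is to obtain \eqref{A} directly from the Strichartz estimate \eqref{SE-mKdV}, deduce \eqref{ST1} by Bernstein's inequality, and then reach the summed estimates \eqref{B} and \eqref{C} by adding over dyadic frequencies and invoking $H^{1/4} = B^{1/4}_{2,2}$ together with the monotonicity of $\ell^p$-norms. Since $u = u^{(0)} = U(t)\phi$ solves the free Airy equation, for \eqref{A} I would apply \eqref{SE-mKdV} with $F = 0$ in its frequency-localized form: the Fourier multiplier $\varphi(\lambda^{-1}\cdot)$ commutes with $U(t)$, and on a single dyadic block of frequency $\sim\lambda$ the norm $\Vert\cdot\Vert_{B^{3/8}_{4,2}}$ is comparable to $\Vert D_x^{3/8}\cdot\Vert_{L^4_x}$, so that $\Vert D_x^{3/8}u_{\lambda}\Vert_{L^8_tL^4_x} \lesssim \Vert\phi_{\lambda}\Vert_{H^{1/4}_x} \sim \Vert D_x^{1/4}\phi_{\lambda}\Vert_{L^2_x}$. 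Then H\"older's inequality in time on $[0,T]$, i.e.\ $\Vert g\Vert_{L^4(0,T)} \le T^{1/8}\Vert g\Vert_{L^8(0,T)}$, turns this into \eqref{A}. Raising \eqref{A} to the fourth power and using Bernstein's inequality, $\Vert D_x^{3/8}u_{\lambda}\Vert_{L^4_x}\sim\lambda^{3/8}\Vert u_{\lambda}\Vert_{L^4_x}$ and $\Vert D_x^{1/4}\phi_{\lambda}\Vert_{L^2_x}\sim\lambda^{1/4}\Vert\phi_{\lambda}\Vert_{L^2_x}$, immediately gives \eqref{ST1}.

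For \eqref{B} I would sum \eqref{ST1} over dyadic $\lambda\ge 1$, writing the right-hand side as $T^{1/2}\sum_{\lambda}(\lambda^{1/4}\Vert\phi_{\lambda}\Vert_{L^2_x})^4$, and then apply the embedding $\ell^2\hookrightarrow\ell^4$ (so that $\sum a_{\lambda}^4 \le (\sum a_{\lambda}^2)^2$) together with $\sum_{\lambda}(\lambda^{1/4}\Vert\phi_{\lambda}\Vert_{L^2_x})^2 \sim \Vert\phi\Vert_{H^{1/4}_x}^2 = r^2$, the low-frequency block $\lambda = 1/2$ being absorbed into $S_1\phi$. Estimate \eqref{C} follows the same pattern: rewrite \eqref{ST1} as $\Vert u_{\lambda}\Vert_{L^4_{t,x}}\lesssim T^{1/8}\lambda^{-1/8}\Vert\phi_{\lambda}\Vert_{L^2_x}$, so that $\lambda\Vert u_{\lambda}\Vert_{L^4_{t,x}}^{8/3}\lesssim T^{1/3}(\lambda^{1/4}\Vert\phi_{\lambda}\Vert_{L^2_x})^{8/3}$, and sum using $\ell^2\hookrightarrow\ell^{8/3}$ and the same identification with $r$.

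This argument is essentially routine, and I do not expect a genuine obstacle; the only step that merits an explicit line of justification is the frequency-localized version of the Strichartz estimate \eqref{SE-mKdV}, namely that a single dyadic block of a free solution obeys the bound with the $L^8_tB^{3/8}_{4,2}$-norm replaced by $T^{0}\Vert D_x^{3/8}\cdot\Vert_{L^8_tL^4_x}$ and with the data norm replaced by $\Vert D_x^{1/4}\phi_{\lambda}\Vert_{L^2_x}$. After that, everything reduces to H\"older in time, Bernstein in space, and the monotonicity of $\ell^p$-norms, with minor bookkeeping for the inhomogeneous ($\lambda=1/2$) Littlewood--Paley block.
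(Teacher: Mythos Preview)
Your proposal is correct and follows essentially the same route as the paper: H\"older in time to pass from $L^4_t$ to $L^8_t$, the frequency-localized Strichartz estimate \eqref{SE-mKdV} with $F=0$ for \eqref{A}, Bernstein for \eqref{ST1}, and then summation with $\ell^2\hookrightarrow\ell^4$ and $\ell^2\hookrightarrow\ell^{8/3}$ for \eqref{B} and \eqref{C}. The only difference is that you spell out the commutation of $P_\lambda$ with $U(t)$ and the comparability of the Besov norm with $\Vert D_x^{3/8}\cdot\Vert_{L^4_x}$ on a single block, which the paper takes for granted.
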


\begin{lemma}\label{lma4.3}
    There holds 
    \begin{equation*}
        \sum_{1 \leq \mu, \lambda \lesssim \sigma}\sigma^{\frac{1}{6}}\Vert u_{\mu}\Vert_{L_{t, x}^4}^{\frac{4}{3}}\Vert \phi_{\lambda}\Vert_{L_x^2}^{\frac{4}{3}}\Vert \phi_{\sigma}\Vert_{L_x^2}^{\frac{4}{3}} \lesssim T^{\frac{1}{6}}r^{\frac{14}{3}}.
    \end{equation*}
    \begin{proof}
        We first prove 
        \begin{equation}\label{42}
            I := \sum_{1 \leq \mu, \lambda \lesssim \sigma}\sigma^{\frac{1}{6}}\Vert u_{\mu}\Vert_{L_{t, x}^4}^{\frac{4}{3}}\Vert \phi_{\lambda}\Vert_{L_x^2}^{\frac{4}{3}}\Vert \phi_{\sigma}\Vert_{L_x^2}^{\frac{4}{3}} \lesssim T^{\frac{1}{6}}r^{\frac{8}{3}}\sum_{\sigma \geq 1}\sigma^{\frac{1}{6} + \varepsilon}\Vert \phi_{\sigma}\Vert_{L_x^2}^{\frac{4}{3}},
        \end{equation}
        where $\varepsilon \in (0, 1/6)$ is arbitrary small. 
        
        To prove \eqref{42}, we divdie $I$ into three parts:
        \begin{align*}
            I &= \left(\sum_{1 \leq \mu \ll \lambda \lesssim \sigma} + \sum_{1 \leq \lambda \ll \mu \lesssim \sigma} + \sum_{1 \leq \mu \sim \lambda \lesssim \sigma}\right)\sigma^{\frac{1}{6}}\Vert u_{\mu}\Vert_{L_{t, x}^4}^{\frac{4}{3}}\Vert \phi_{\lambda}\Vert_{L_x^2}^{\frac{4}{3}}\Vert \phi_{\sigma}\Vert_{L_x^2}^{\frac{4}{3}} \\
            &=: I_1 + I_2 + I_3.
        \end{align*}
        It suffices to estimate $I_1$ and $I_3$, since the estimate of $I_2$ is similar to $I_1$. For $I_1$, by H\"older's inequality and \eqref{B} we have 
        \begin{equation}\label{43}
        \begin{aligned}
            I_1 &\lesssim \sum_{\sigma}\sigma^{\frac{1}{6}}\Vert \phi_{\sigma}\Vert_{L_x^2}^{\frac{4}{3}}\sum_{\lambda \lesssim \sigma}\Vert \phi_{\lambda}\Vert_{L_x^2}^{\frac{4}{3}}\sum_{1 \leq \mu \ll \lambda}\Vert u_{\mu}\Vert_{L_{t, x}^4}^{\frac{4}{3}} \\ 
            &\lesssim \sum_{\sigma}\sigma^{\frac{1}{6}}\Vert \phi_{\sigma}\Vert_{L_x^2}^{\frac{4}{3}}\sum_{\lambda \lesssim \sigma}\Vert \phi_{\lambda}\Vert_{L_x^2}^{\frac{4}{3}}\sum_{1 \leq \mu \ll \lambda}\mu^{\frac{1}{2}}\Vert u_{\mu}\Vert_{L_{t, x}^4}^{\frac{4}{3}} \\ 
            &\leq \sum_{\sigma}\sigma^{\frac{1}{6}}\Vert \phi_{\sigma}\Vert_{L_x^2}^{\frac{4}{3}}\sum_{\lambda \lesssim \sigma}\Vert \phi_{\lambda}\Vert_{L_x^2}^{\frac{4}{3}}\left(\left(\sum_{1 \leq \mu \ll \lambda}1^{\frac{3}{2}}\right)^{\frac{2}{3}}\left(\sum_{\mu \geq 1}\mu^{\frac{3}{2}}\Vert u_{\mu}\Vert_{L_{t, x}^4}^4\right)^{\frac{1}{3}}\right) \\ 
            &\lesssim T^{\frac{1}{6}}r^{\frac{4}{3}}\sum_{\sigma \geq 1}\sigma^{\frac{1}{6}}\Vert \phi_{\sigma}\Vert_{L_x^2}^{\frac{4}{3}}\sum_{1 \leq \lambda \lesssim \sigma}(\ln \lambda)^{\frac{2}{3}}\Vert \phi_{\lambda}\Vert_{L_x^2}^{\frac{4}{3}} \\ 
            &\lesssim T^{\frac{1}{6}}r^{\frac{4}{3}}\sum_{\sigma \geq 1}\sigma^{\frac{1}{6}}\Vert \phi_{\sigma}\Vert_{L_x^2}^{\frac{4}{3}}\sum_{1 \leq \lambda \lesssim \sigma}\lambda^{\frac{1}{3}}\Vert \phi_{\lambda}\Vert_{L_x^2}^{\frac{4}{3}} \\ 
            &\leq T^{\frac{1}{6}}r^{\frac{4}{3}}\sum_{\sigma \geq 1}\sigma^{\frac{1}{6}}\Vert \phi_{\sigma}\Vert_{L_x^2}^{\frac{4}{3}}\left(\left(\sum_{1 \leq \lambda \lesssim \sigma}1^3\right)^{\frac{1}{3}}\left(\sum_{\lambda \geq 1}\lambda^{\frac{1}{2}}\Vert \phi_{\lambda}\Vert_{L_x^2}^2\right)^{\frac{2}{3}}\right) \\ 
            &\lesssim T^{\frac{1}{6}}r^{\frac{4}{3}} \cdot r^{\frac{4}{3}} \sum_{\sigma \geq 1}\sigma^{\frac{1}{6}}(\ln\sigma)^{\frac{1}{3}}\Vert \phi_{\sigma}\Vert_{L_x^2}^{\frac{4}{3}}\\ 
            &= T^{\frac{1}{6}}r^{\frac{8}{3}}\sum_{\sigma \geq 1}\sigma^{\frac{1}{6} + \varepsilon}\Vert \phi_{\sigma}\Vert_{L_x^2}^{\frac{4}{3}},
        \end{aligned}
    \end{equation}
        where $\varepsilon$ can be arbitrary small.

        For $I_3$, it follows from H\"older's inequality and \eqref{B} that 
        \begin{equation}\label{44}
        \begin{aligned}
            I_3 &\lesssim \sum_{\sigma}\sigma^{\frac{1}{6}}\Vert \phi_{\sigma}\Vert_{L_x^2}^{\frac{4}{3}}\sum_{1 \leq \lambda \lesssim \sigma}\Vert u_{\sigma}\Vert_{L_{t, x}^4}^{\frac{4}{3}}\Vert \phi_{\sigma}\Vert_{L_x^2}^{\frac{4}{3}} \\ 
            &\lesssim \left(\sum_{\sigma \geq 1}\sigma^{\frac{1}{6}}\Vert \phi_{\sigma}\Vert_{L_x^2}^{\frac{4}{3}}\right)\left(\sum_{1 \leq \lambda \lesssim \sigma}\lambda^{\frac{1}{2}}\Vert u_{\lambda}\Vert_{L_{t, x}^4}^{\frac{4}{3}}\lambda^{\frac{1}{3}}\Vert \phi_{\lambda}\Vert_{L_x^2}^{\frac{4}{3}}\right) \\ 
            &\lesssim \left(\sum_{\sigma \geq 1}\sigma^{\frac{1}{6}}\Vert \phi_{\sigma}\Vert_{L_x^2}^{\frac{4}{3}}\right)\left(\sum_{\lambda \geq 1}\lambda^{\frac{3}{2}}\Vert u_{\lambda}\Vert_{L_{t, x}^4}^4\right)^{\frac{1}{3}}\left(\sum_{\lambda \geq 1}\lambda^{\frac{1}{2}}\Vert \phi_{\lambda}\Vert_{L_x^2}^2\right)^{\frac{2}{3}} \\ 
            &\lesssim T^{\frac{1}{6}}r^{\frac{8}{3}}\sum_{\sigma}\sigma^{\frac{1}{6}}\Vert \phi_{\sigma}\Vert_{L_x^2}^{\frac{4}{3}}.
        \end{aligned}
    \end{equation}
    Combining \eqref{43} with \eqref{44} and we complete the proof of \eqref{42}.

    Finally, by \eqref{42} we have 
    \begin{align*}
        I \lesssim \left(\sum_{\sigma \geq 1, \sigma^{\frac{1}{3} - \varepsilon}\Vert \phi_{\sigma}\Vert_{L_x^2}^{\frac{2}{3}} \leq 1} + \sum_{\sigma \geq 1, \sigma^{\frac{1}{3} - \varepsilon}\Vert \phi_{\sigma}\Vert_{L_x^2}^{\frac{2}{3}} > 1}\right)\sigma^{\frac{1}{6} + \varepsilon}\Vert \phi_{\sigma}\Vert_{L_x^2}^{\frac{4}{3}} =: I_1' + I_2'.
    \end{align*}
    On the one hand, for $I_1'$, we have 
    \begin{equation}\label{abcd}
        I_1' \leq \sum_{\sigma \geq 1}\sigma^{\frac{1}{6} + \varepsilon}\sigma^{2\varepsilon - \frac{2}{3}} = \sum_{\sigma \geq 1}\sigma^{3\varepsilon - \frac{1}{2}} = C < +\infty,
    \end{equation}
    where $C$ is a constant independent of $\varepsilon$; on the other hand, 
    \begin{align}\label{cd}
        I_2' \leq \sum_{\sigma \geq 1}\sigma^{\frac{1}{6} + \varepsilon}\Vert \phi_{\sigma}\Vert_{L_x^2}^{\frac{4}{3}} \cdot \sigma^{\frac{1}{3} - \varepsilon}\Vert \phi_{\sigma}\Vert_{L_x^2}^{\frac{2}{3}} = \sum_{\sigma \geq 1}\sigma^{\frac{1}{2}}\Vert \phi_{\sigma}\Vert_{L_x^2}^2 \leq r^2.
    \end{align}
    Now the desired conclusion follows from \eqref{abcd} and \eqref{cd}.
    \end{proof}
\end{lemma}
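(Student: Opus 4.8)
The plan is to exploit that the summand factorizes: each of $\Vert u_\mu\Vert_{L^4_{t,x}}$, $\Vert\phi_\lambda\Vert_{L^2_x}$ and $\Vert\phi_\sigma\Vert_{L^2_x}$ is a scalar, so the constraint $1\le\mu,\lambda\lesssim\sigma$ decouples and the triple sum equals $\sum_{\sigma\ge1}\sigma^{1/6}\Vert\phi_\sigma\Vert_{L^2_x}^{4/3}\,A(\sigma)\,B(\sigma)$ with $A(\sigma):=\sum_{1\le\mu\lesssim\sigma}\Vert u_\mu\Vert_{L^4_{t,x}}^{4/3}$ and $B(\sigma):=\sum_{1\le\lambda\lesssim\sigma}\Vert\phi_\lambda\Vert_{L^2_x}^{4/3}$. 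It then suffices to bound three one-dimensional dyadic sums.

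First I would bound $A(\sigma)$ uniformly in $\sigma$. Writing $\Vert u_\mu\Vert_{L^4_{t,x}}^{4/3}=\mu^{-1/2}\,\mu^{1/2}\Vert u_\mu\Vert_{L^4_{t,x}}^{4/3}$ and applying Cauchy--Schwarz in $\mu$ gives $A(\sigma)\le\bigl(\sum_{\mu\ge1}\mu^{-1}\bigr)^{1/2}\bigl(\sum_{\mu\ge1}\mu\Vert u_\mu\Vert_{L^4_{t,x}}^{8/3}\bigr)^{1/2}\lesssim T^{1/6}r^{4/3}$ by \eqref{C} (one may equally use \eqref{B} with a Hölder pairing of exponents $3/2$ and $3$). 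The essential point is that, since $\mu\ge1$, at every step one can trade a strictly negative power of the summation variable, so no logarithm in $\sigma$ ever appears. The same device treats the two sums against the datum: from $\Vert\phi_\lambda\Vert_{L^2_x}^{4/3}=\lambda^{-1/3}(\lambda^{1/4}\Vert\phi_\lambda\Vert_{L^2_x})^{4/3}$, Hölder (exponents $3$ and $3/2$) together with $\sum_{\lambda\ge1}\lambda^{1/2}\Vert\phi_\lambda\Vert_{L^2_x}^2\lesssim r^2$ gives $B(\sigma)\lesssim r^{4/3}$ uniformly in $\sigma$, and the same manipulation on $\sigma^{1/6}\Vert\phi_\sigma\Vert_{L^2_x}^{4/3}=\sigma^{-1/6}(\sigma^{1/4}\Vert\phi_\sigma\Vert_{L^2_x})^{4/3}$ gives $\sum_{\sigma\ge1}\sigma^{1/6}\Vert\phi_\sigma\Vert_{L^2_x}^{4/3}\lesssim r^{4/3}$. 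Multiplying the three bounds, the triple sum is $\lesssim T^{1/6}r^{4/3}\cdot r^{4/3}\cdot r^{4/3}=T^{1/6}r^4$, which is $\le T^{1/6}r^{14/3}$ for $r\gtrsim1$ (and for small data one obtains the even better $T^{1/6}r^{8/3}$); either form is all that Proposition \ref{prop4.1} requires.

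I do not anticipate a real obstacle once the factorization is observed; the two points to keep straight are (i) retaining a strictly negative dyadic weight at each summation, so that no $\ln\sigma$ is generated, and (ii) collecting the power of $T$ exactly once, since it enters only via \eqref{B}/\eqref{C} whereas the datum sums are time-independent. If one prefers not to separate variables, the same estimate follows by splitting $\{1\le\mu,\lambda\lesssim\sigma\}$ into the three regions $\mu\ll\lambda$, $\lambda\ll\mu$ and $\mu\sim\lambda$ and carrying out the Hölder estimates region by region; this only generates harmless factors $(\ln\sigma)^{O(1)}$ that are absorbed into $\sigma^{\varepsilon}$ for arbitrary $\varepsilon\in(0,1/6)$, at the price of a longer argument.
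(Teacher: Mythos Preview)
Your argument is correct and in fact cleaner than the paper's. The key observation you make---that the summand factorizes as $\sigma^{1/6}\Vert\phi_\sigma\Vert^{4/3}\,A(\sigma)\,B(\sigma)$ with $A(\sigma),B(\sigma)$ bounded \emph{uniformly} in $\sigma$---lets you avoid the whole machinery the paper deploys. The paper instead splits $\{1\le\mu,\lambda\lesssim\sigma\}$ into the three regions $\mu\ll\lambda$, $\lambda\ll\mu$, $\mu\sim\lambda$, applies H\"older at the borderline exponent so that factors $(\ln\lambda)^{2/3}$ and $(\ln\sigma)^{1/3}$ appear, absorbs them into $\sigma^{\varepsilon}$, and finally splits the remaining $\sigma$-sum according to whether $\sigma^{1/3-\varepsilon}\Vert\phi_\sigma\Vert^{2/3}$ exceeds $1$. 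Your choice of strictly subcritical weights (e.g.\ pairing $\mu^{-1/2}$ with $\mu^{1/2}\Vert u_\mu\Vert^{4/3}$, and $\lambda^{-1/3}$ with $\lambda^{1/3}\Vert\phi_\lambda\Vert^{4/3}$) leaves a convergent geometric series at every step and sidesteps all of this. You are right that the alternative you sketch at the end---the three-region split with $\sigma^\varepsilon$ absorption---is exactly what the paper carries out.

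On the output: you obtain $T^{1/6}r^4$, whereas the paper's proof actually yields $T^{1/6}(r^{8/3}+r^{14/3})$ (the constant from the $I_1'$ piece is not suppressed by a power of $r$, so the stated bound $T^{1/6}r^{14/3}$ in the lemma is only literally valid for $r\gtrsim 1$). Your $r^4$ lies between $r^{14/3}$ and $r^{8/3}$ for all $r$, so it is uniformly at least as strong as the paper's true output, and as you note it feeds into the proof of Proposition~\ref{prop4.1} without any change (indeed it even simplifies the final smallness condition on $T$). One small wording slip: your parenthetical ``for small data one obtains the even better $T^{1/6}r^{8/3}$'' has the inequality backwards---for $r\le 1$ your bound $r^4$ is \emph{smaller} than $r^{8/3}$, so it is $r^4$ that is better---but this does not affect anything.
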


\begin{proof}[Proof of Proposition \ref{prop4.1}]
        We first prove \eqref{37}, \eqref{38} and \eqref{BE1}. Argue by induction, when $j = 0$, the estimate \eqref{37} and \eqref{BE1} follow from Strichartz estimate \eqref{SE-mKdV} and Corollary \ref{be-KdV}, respectively.
        For \eqref{38}, by H\"older's inequality, Bernstein's inequality and the inclusion $\ell^\frac{4}{3} \hookrightarrow \ell^2$ we have 
        \begin{align*}
            \Vert \partial_x(u^3)\Vert_{N(T)} &\lesssim T^{\frac{1}{8}}\left(\left\Vert\Vert S_1(\partial_x(u^3))\Vert_{L_x^{\frac{4}{3}}} + \left(\sum_{\lambda \geq 1}\left(\lambda^{\frac{1}{8}}\Vert P_{\lambda}(\partial_x(u^3))\Vert_{L_x^{\frac{4}{3}}}\right)^2\right)^{\frac{1}{2}}\right\Vert_{L_t^{\frac{4}{3}}}\right) \\ 
            &\lesssim T^{\frac{1}{8}}\left(\Vert S_1(\partial_x(u^3))\Vert_{L_{t, x}^{\frac{4}{3}}} + \left\Vert\left(\sum_{\lambda \geq 1}\left(\lambda^{\frac{9}{8}}\Vert P_{\lambda}(u^3)\Vert_{L_x^{\frac{4}{3}}}\right)^\frac{4}{3}\right)^{\frac{3}{4}}\right\Vert_{L_t^{\frac{4}{3}}}\right) \\ 
            &\lesssim T^{\frac{1}{8}}\left(\Vert S_1(\partial_x(u^3))\Vert_{L_{t, x}^{\frac{4}{3}}} + \left(\sum_{\lambda \geq 1}\left(\lambda^{\frac{9}{8}}\Vert P_{\lambda}(u^3)\Vert_{L_{x, t}^{\frac{4}{3}}}\right)^\frac{4}{3}\right)^{\frac{3}{4}}\right) \\ 
            &=: T^{\frac{1}{8}}(I_1 + I_2^{\frac{3}{4}}).
        \end{align*}
        We first estimate $I_2$:
        \begin{align*}
            I_2 \lesssim \left(\sum_{1 \leq \lambda \lesssim \lambda_1 \lesssim \lambda_2 \sim \lambda_3} + \sum_{1 \leq \lambda_1 \lesssim \lambda \lesssim \lambda_2 \sim \lambda_3} + \sum_{1 \leq \lambda_1, \lambda_2 \lesssim \lambda_3 \sim \lambda}\right)\lambda^{\frac{3}{2}}\Vert u_{\lambda_1}u_{\lambda_2}u_{\lambda_3}\Vert_{L_{x, t}^{\frac{4}{3}}}^{\frac{4}{3}} :=  I_{2a} + I_{2b} + I_{2c},
        \end{align*}
        For $I_{2a}$, by H\"older's inequality, \eqref{B} and \eqref{C} we have 
        \begin{align*}
            I_{2a} \lesssim \sum_{1 \leq \lambda \lesssim \mu \lesssim \sigma}\lambda^{\frac{3}{2}}\Vert u_{\mu}\Vert_{L_{t, x}^4}^{\frac{4}{3}}\Vert u_{\sigma}\Vert_{L_{t, x}^4}^{\frac{8}{3}} &\lesssim \sum_{\sigma}\Vert u_{\sigma}\Vert_{L_{t, x}^4}^{\frac{8}{3}}\sum_{\mu \lesssim \sigma}\Vert u_{\mu}\Vert_{L_{t, x}^4}^{\frac{4}{3}} \sum_{1 \leq \lambda \lesssim \mu}\lambda^{\frac{3}{2}} \\
            &\lesssim \sum_{\sigma}\Vert u_{\sigma}\Vert_{L_{t, x}^4}^{\frac{8}{3}}\sum_{1 \leq \mu \lesssim \sigma}\mu^{\frac{3}{2}}\Vert u_{\mu}\Vert_{L_{t, x}^4}^{\frac{4}{3}} \\
            &\lesssim \sum_{\sigma}\Vert u_{\sigma}\Vert_{L_{t, x}^4}^{\frac{8}{3}}\left(\left(\sum_{1 \leq \mu \lesssim \sigma}\mu^{\frac{3}{2}}\right)^{\frac{2}{3}}\left(\sum_{\mu \geq 1}\mu^{\frac{3}{2}}\Vert u_{\mu}\Vert_{L_{t, x}^4}^4\right)^{\frac{1}{3}}\right) \\
            &\lesssim T^{\frac{1}{6}}r^{\frac{4}{3}}\sum_{\sigma \geq 1}\sigma\Vert u_{\sigma}\Vert_{L_{t, x}^4}^{\frac{8}{3}} \\
            &\lesssim T^{\frac{1}{6}}r^{\frac{4}{3}} \cdot T^{\frac{1}{3}}r^{\frac{8}{3}} \\
            &= T^{\frac{1}{2}}r^4.
        \end{align*}
    For $I_{2b}$, using H\"older's inequality and bilinear estimate \eqref{BE-KdV} and we obtain 
    \begin{align*}
        I_{2b} \lesssim \sum_{1 \leq \mu \lesssim \lambda \lesssim \sigma}\lambda^{\frac{3}{2}}\Vert u_{\sigma}\Vert_{L_{t, x}^4}^{\frac{4}{3}}\Vert u_{\mu}u_{\sigma}\Vert_{L_{t, x}^2}^{\frac{4}{3}} \lesssim \sum_{1 \leq \mu \lesssim \lambda \lesssim \sigma}\lambda^{\frac{3}{2}}\sigma^{-\frac{4}{3}}\Vert u_{\sigma}\Vert_{L_{t, x}^4}^{\frac{4}{3}}\Vert \phi_{\mu}\Vert_{L_x^2}^{\frac{4}{3}}\Vert \phi_{\sigma}\Vert_{L_x^2}^{\frac{4}{3}} \lesssim \sum_{1 \leq \mu \lesssim \lambda \lesssim \sigma}\lambda^{\frac{1}{6}}\Vert u_{\sigma}\Vert_{L_{t, x}^4}^{\frac{4}{3}}\Vert \phi_{\mu}\Vert_{L_x^2}^{\frac{4}{3}}\Vert \phi_{\sigma}\Vert_{L_x^2}^{\frac{4}{3}}.
    \end{align*}
    Then it follows from H\"older's inequality and \eqref{B} that 
    \begin{align*}
        I_{2b} &\lesssim \sum_{\sigma}\Vert \phi_{\sigma}\Vert_{L_x^2}^{\frac{4}{3}}\Vert u_{\sigma}\Vert_{L_{t, x}^4}^{\frac{4}{3}}\sum_{\lambda \lesssim \sigma}\lambda^{\frac{1}{6}}\sum_{1 \leq \mu \lesssim \lambda}\Vert \phi_{\mu}\Vert_{L_x^2}^{\frac{4}{3}} \\ 
        &\lesssim \sum_{\sigma \geq 1}\Vert \phi_{\sigma}\Vert_{L_x^2}^{\frac{4}{3}}\Vert u_{\sigma}\Vert_{L_{t, x}^4}^{\frac{4}{3}}\sum_{\lambda \lesssim \sigma}\lambda^{\frac{1}{6}}\left(\left(\sum_{1 \leq \mu \lesssim \lambda}1^3\right)^{\frac{1}{3}}\left(\sum_{\mu \geq 1}\left(\mu^{\frac{1}{3}}\Vert \phi_{\mu}\Vert_{L_x^2}^{\frac{4}{3}}\right)^{\frac{3}{2}}\right)^{\frac{2}{3}} \right) \\ 
        &\lesssim r^{\frac{4}{3}}\sum_{\sigma \geq 1}\Vert \phi_{\sigma}\Vert_{L_x^2}^{\frac{4}{3}}\Vert u_{\sigma}\Vert_{L_{t, x}^4}^{\frac{4}{3}}\sum_{1 \leq \lambda \lesssim \sigma}\lambda^{\frac{1}{6} + \frac{1}{3}} \\ 
        &\lesssim r^{\frac{4}{3}}\sum_{\sigma \geq 1}\sigma^{\frac{1}{2}}\Vert \phi_{\sigma}\Vert_{L_x^2}^{\frac{4}{3}}\Vert u_{\sigma}\Vert_{L_{t, x}^4}^{\frac{4}{3}} \\ 
        &\lesssim r^{\frac{4}{3}}\sum_{\sigma \geq 1}\sigma^{\frac{1}{3}}\Vert \phi_{\sigma}\Vert_{L_x^2}^{\frac{4}{3}}\sigma^{\frac{1}{2}}\Vert u_{\sigma}\Vert_{L_{t, x}^4}^{\frac{4}{3}} \\ 
        &\lesssim r^{\frac{4}{3}}\left(\sum_{\sigma \geq 1}\sigma^{\frac{1}{2}}\Vert \phi_{\sigma}\Vert_{L_x^2}^2\right)^{\frac{2}{3}}\left(\sum_{\sigma \geq 1}\sigma^{\frac{3}{2}}\Vert u_{\sigma}\Vert_{L_{t, x}^4}^4\right)^{\frac{1}{3}} \\ 
        &\lesssim r^{\frac{4}{3}} \cdot r^{\frac{4}{3}} \cdot \left(T^{\frac{1}{2}}r^4\right)^{\frac{1}{3}} \\ 
        &= T^{\frac{1}{6}}r^4.
    \end{align*}
    Finally, for $I_{2c}$, by H\"older's inequality, bilinear estimate \eqref{BE-KdV} and Lemma \ref{lma4.3}, we have
    \begin{align*}
        I_{2c} \lesssim \sum_{1 \leq \mu, \sigma \lesssim \lambda}\lambda^{\frac{3}{2}}\Vert u_{\mu}\Vert_{L_{t, x}^4}^{\frac{4}{3}}\Vert u_{\sigma}u_{\lambda}\Vert_{L_{t, x}^2}^{\frac{4}{3}} &\lesssim \sum_{1 \leq \mu, \sigma \lesssim \lambda}\lambda^{\frac{1}{6}}\Vert u_{\mu}\Vert_{L_{t, x}^4}^{\frac{4}{3}}\Vert \phi_{\sigma}\Vert_{L_x^2}^{\frac{4}{3}}\Vert \phi_{\lambda}\Vert_{L_x^2}^{\frac{4}{3}} \lesssim T^{\frac{1}{6}}r^{\frac{14}{3}}.
    \end{align*}
    To conlude,
    \begin{equation*}
        I_2 \lesssim T^{\frac{1}{2}}r^4 + T^{\frac{1}{6}}r^4 + T^{\frac{1}{6}}r^{\frac{14}{3}}.
    \end{equation*}
    For $I_1$, we have  
    \begin{align*}
        I_1 \lesssim \left\Vert\sum_{\lambda < 1}\lambda P_{\lambda}(u^3)\right\Vert_{L_{t, x}^{\frac{4}{3}}} \lesssim \left\Vert\sum_{\lambda_1, \lambda_2, \lambda_3}u_{\lambda_1}u_{\lambda_3}u_{\lambda_3}\right\Vert_{L_{t, x}^{\frac{4}{3}}} &\leq \left\Vert\left(\sum_{\lambda}u_{\lambda}\right)^3\right\Vert_{L_{t, x}^{\frac{4}{3}}} \\
        &= \left\Vert\sum_{\lambda}u_{\lambda}\right\Vert_{L_{t, x}^4}^3  \\
        &\lesssim \Vert u\Vert_{S(T)}^3 \\
        &\lesssim \left(T^{\frac{1}{8}}r\right)^3 \\
        &= T^{\frac{3}{8}}r^3.
    \end{align*}
    It follows that
    \begin{align*}
        \Vert \partial_x(u^3)\Vert_{N(T)} \leq CT^{\frac{1}{8}}\left(T^{\frac{1}{8}}r^3 + T^{\frac{3}{8}}r^3 + T^{\frac{1}{8}}r^{\frac{7}{2}}\right) = C\left(T^{\frac{1}{4}}r^3 + T^{\frac{1}{2}}r^3 + T^{\frac{1}{4}}r^{\frac{7}{2}}\right) \leq r,
    \end{align*}
    where we choose $T > 0$ such that $CT^{\frac{1}{4}}r^{\frac{5}{2}} \ll 1$. This completes the proof of \eqref{38} when $j = 1$. 
    Moreover, by \eqref{SE-mKdV} we have 
        \begin{equation}\label{39A}
            \Vert u^{(1)}\Vert_{S(T)} \leq C\left(\Vert \phi\Vert_{H^{\frac{1}{4}}_x} + \Vert \partial_x(u^3)\Vert_{N(T)}\right).
        \end{equation} 
    Therefore, we have that 
    \begin{equation*}
        \Vert u^{(1)}\Vert_{S(T)} \leq 2Cr,
    \end{equation*}

    Suppose now that \eqref{37}, \eqref{38} and \eqref{BE1} hold for $j = k - 1 \in \mathbb{N}$. On the one hand, by Strichartz estimate \eqref{SE-mKdV} we have 
    \begin{equation}\label{st3}
        \Vert u^{(k)}\Vert_{S(T)} \leq 2Cr.
    \end{equation}
    On the other hand, it is straightforward to verify that $u_{\mu}^{(k)}$ and $u_{\lambda}^{(k)}$ satisfy the following equations:
    \begin{equation*}
        \begin{cases}
            \displaystyle \partial_t\left(\frac{(u^{(k)}_{\mu})^2}{2}\right) + \partial_x\left(u^{(k)}_{\mu}\partial_x^2u^{(k)}_{\mu} - \frac{(\partial_xu^{(k)}_{\mu})^2}{2}\right) = -u^{(k)}_{\mu}P_{\mu}\partial_x((u^{(k - 1)})^3), \\ 
            \displaystyle \partial_t\left(\frac{(u^{(k)}_{\lambda})^2}{2}\right) + \partial_x\left(u^{(k)}_{\lambda}\partial_x^2u^{(k)}_{\lambda} - \frac{(\partial_xu^{(k)}_{\lambda})^2}{2}\right) = -u^{(k)}_{\lambda}P_{\lambda}\partial_x((u^{(k - 1)})^3).
        \end{cases}
    \end{equation*} 
    Apply div-curl lemma \eqref{d-c} to the equations above. Since
    \begin{gather*}
        \Vert u^{(k)}_{\mu}P_{\mu}(\partial_x((u^{(k - 1)})^3))\Vert_{L_{t, x}^1} \leq \Vert u^{(k)}_{\mu}\Vert_{L_t^8L_x^4}\Vert P_{\mu}(\partial_x((u^{(k - 1)})^3))\Vert_{L^{\frac{8}{7}}_tL_x^{\frac{4}{3}}}, \\ 
        \Vert u^{(k)}_{\lambda}P_{\lambda}(\partial_x((u^{(k - 1)})^3))\Vert_{L_{t, x}^1} \leq \Vert u^{(k)}_{\lambda}\Vert_{L_t^8L_x^4}\Vert P_{\lambda}(\partial_x((u^{(k - 1)})^3))\Vert_{L^{\frac{8}{7}}_tL_x^{\frac{4}{3}}},
    \end{gather*}
    it follows from \eqref{st3} and the inductive hypothesis that the following bilinear estimate is also valid when $\mu \ll \lambda$: 
    \begin{equation*}
        \Vert u_{\mu}^{(k)}u_{\lambda}^{(k)}\Vert_{L_{t, x}^2} \leq 3C'C\lambda^{-1}\Vert u_{\mu}^{(k)}(0)\Vert_{L_x^2}\Vert u_{\lambda}^{(k)}(0)\Vert_{L_x^2} = 3CC'\lambda^{-1}\Vert \phi_{\mu}\Vert_{L_x^2}\Vert \phi_{\lambda}\Vert_{L_x^2},
    \end{equation*}
    that is, the bilinear estimate \eqref{BE1} holds true when $j = k$. Finally, repeating the proof of the case $j = 0$ and we can also know that \eqref{38} hold for $j = k$ (where we choose $T > 0$ sufficiently small such that $3CC'T^{\frac{1}{4}}r^{\frac{5}{2}} \leq 1/2$) and by Strichartz estimate, we have 
    \begin{equation*}
        \Vert u^{(k + 1)}\Vert_{S(T)} \leq 2Cr.
    \end{equation*}
    Thus \eqref{37}, \eqref{38} and \eqref{BE1} hold for all $j$ by induction.

    Finally, for \eqref{39} and \eqref{40A}, using div-curl lemma, Strichartz estimate \eqref{SE-mKdV} and induction proof, it can be shown similarly that the following holds for $j = 1, 2, \cdots$:
    \begin{gather*}
        \Vert u^{(j + 1)} - u^{(j)}\Vert_{S(T)} \leq C\Vert \partial_x((u^{(j)})^3 - (u^{(j - 1)})^3)\Vert_{N(T)} \leq \frac{1}{2}\Vert u^{(j)} - u^{(j - 1)}\Vert_{S(T)}, \\ 
        \Vert \partial_x((u^{(j + 1)})^3 - (u^{(j)})^3)\Vert_{N(T)} \leq C'\Vert u^{(j + 1)} - u^{(j)}\Vert_{S(T)} \leq \frac{1}{2}\Vert \partial_x((u^{(j)})^3 - (u^{(j - 1)})^3)\Vert_{N(T)}.
    \end{gather*}
    Here we choose $T > 0$ sufficiently small. This completes the proof.
\end{proof}

\begin{remark}
    It follows from the inclusion \eqref{ab} that our method is also applicable to the case $s \geq 1/4$ (resp., $s \geq 1/2$) as stated in Theorem \ref{thm-mKdV} (resp., Theorem \ref{thm-mBO}). 
    Futhermore, the LWP for IVP \eqref{mgBO} when $s \geq 1/2 - \alpha/4$ can be also obtained in a similar way.
\end{remark}

\textbf{Acknowledgement.} L. Tu would like to thank Prof. Baoping Liu and Prof. Ning-An Lai for their helpful comments.

This work was supported by the National Natural Science Foundation of China [No. 12171097], the Key Laboratory of Mathematics
for Nonlinear Sciences (Fudan University), the Ministry of Education of China and Shanghai Key Laboratory for Contemporary Applied Mathematics.

\bibliographystyle{plain}
\bibliography{refs}

\end{document}